\newtheorem{prop}{Proposition}[section]
\newtheorem{thm}[prop]{Theorem}
\newtheorem{cor}[prop]{Corollaly}
\newtheorem{lem}[prop]{Lemma}
\newtheorem{exam}[]{Example}
\newtheorem{remark}[prop]{Remark}
\numberwithin{equation}{section}
\newcommand{\R}{\mathbf{R}}
\newcommand{\C}{\mathbf{C}}
\newcommand{\N}{\mathbf{N}}
\newcommand{\Z}{\mathbf{Z}}
\newcommand{\Q}{\mathbf{Q}}
\newcommand{\8}{\infty}
\newcommand{\pl}{\partial}
\newcommand{\ds}{\displaystyle}
\newcommand{\ra}{\rangle}
\newcommand{\la}{\langle}
\begin{document}

\title{A remark on the Schr\"odinger equation on Zoll manifolds}
\author{Hisashi Nishiyama}
\date{}
\maketitle

\begin{abstract}
On the one dimensional sphere, the support of the fundamental solution to the Schr$\rm \ddot o$dinger equation consists of finite points at the time $t\in 2\pi\Q$.
 The paper \cite{Ka} generalized this fact to compact symmetric spaces. In this paper, we consider similar results on Zoll manifolds. We study the singularity for a solution to the equation using a functional calculus of the self-adjoint operator with integer eigenvalues.
\end{abstract}
\footnotetext[1]{2000 Mathematical Subject Classification. 35B65, 35P05, 35Q40, 58J40, 58J47.}
\section{Introduction}
We consider the Schr$\rm \ddot o$dinger equation on Zoll manifolds. First we recall the setting of the problem. We study the following,
\begin{equation}
(i\pl_t+\Delta_M)u=0.
\end{equation}
Here $i=\sqrt{-1}$ is the imaginary unit. $t$ is the time variable and $\pl_t=\frac{\pl}{\pl t}$. $M$ is an $d$-dimension compact smooth Riemannian manifold without boundary. $\Delta_M$ denotes the Laplace-Beltrami operator, i.e. in a local coordinate $x=(x_1,\dots,x_d)\in M$, 
\begin{equation*}
\Delta_M=\sum_{i,j=1}^d\frac{1}{\sqrt{{\rm det}(g_{ij})}}\pl_{x_i} g^{ij}(x)\sqrt {{\rm det}(g_{ij})}\pl_{x_j}.
\end{equation*}
Here the matrix $(g^{ij}(x))>0$ is the inverse of the metric $(g_{ij}(x))$ and ${\rm det}(g_{ij})$ is the determinant of $(g_{ij}(x))$. 
Since $M$ is compact, $\Delta_M$ is a self-adjoint operator in $L^2(M)$, the space of the square integrable functions on $M$. By the spectral theory, we can write the solution operator to the equation (1.1) as $e^{it\Delta_M}$. 
We also assume $M$ is the Zoll manifold, i.e. all geodesics has same minimal period. 
In a local coordinate, the geodesic flow is generated by the Hamiltonian vector field on $T^*M\backslash \{0\}$,
\begin{equation}
H=\pl_\xi l\cdot\pl_x-\pl_x l\cdot\pl_\xi
\end{equation}
were the function $l$ is defined by $l(x,\xi)=\sqrt{\sum_{i,j=1}^d  g^{ij}(x)\xi_i\xi_j}$ for $x\in M$, $\xi\not=0$. 
Without loss of generality, we can assume the geodesic flow has minimal period $2\pi$. 
Examples of Zoll manifolds are so called CROSSes (Compact Rank One Symmetric Spaces). 
For the background of Zoll manifolds, we refer \cite{Be}, \cite{Gu1}. 

The following example is our starting point. 
\begin{exam}({\rm \cite{Ka}, S. Doi}) 
Let $M=S^1\simeq \R/2\pi\Z$. We consider the following equation
\begin{equation}
\begin{cases}
(i\pl_t+\pl_x^2)G=0\\
G|_{t=0}=\delta (x).
\end{cases}
\end{equation}
Here $\delta(x)$ denotes the Dirac measure at $x=0$. 
Then the fundamental solution $G(t,x)$ satisfies the following two statement

(i) For fixed $t/2\pi\in\Q$,  ${\rm supp} (G(t,x))$ is finite points. 

(ii) For fixed $t/2\pi\in\R\backslash\Q$, ${\rm supp} (G(t,x))$ is the whole space $S^1$

\begin{proof}
First we prove the statement (i). By using the Fourier series, the solution can be written as follows
\begin{equation}
G(t,x)=\frac{1}{2\pi}\sum_{k\in\Z}e^{-itk^2+ikx}.
\end{equation}
Taking $t=2\pi n/m$, $n,m\in\Z$, and $k=mj+l$; $j\in\Z$, $0\leq l\leq m-1$. We have
\begin{align*}
G(2\pi n/m,x)&=\frac{1}{2\pi}\sum_{j\in\Z}\sum_{l=0}^{m-1}e^{-i2\pi\frac{n}{m}(mj+l)^2+i(mj+l)x}\\
&=\sum_{l=0}^{m-1}e^{-i2\pi\frac{n}{m}l^2+ilx}\frac{1}{2\pi}\sum_{j\in\Z}e^{imjx}\\
&=\sum_{l=0}^{m-1}e^{-i2\pi\frac{n}{m}l^2+ilx}\frac{1}{m}\sum_{j=0}^{m-1}\delta (x-2\pi j/m)
\end{align*}
For final identity, we use the Poisson summation formula for $2\pi$ periodic functions. We define
\begin{equation}
g(n,m;j)=\frac{1}{m}\sum_{l=0}^{m-1}e^{-i2\pi\frac{n}{m}l^2+il2\pi\frac{j}{m}}=\frac{1}{m}\sum_{l=0}^{m-1}e^{2\pi i(jl-nl^2)/m}.
\end{equation}
We have the following representation of the solution
\begin{equation}
G(2\pi n/m,x)= \sum_{j=0}^{m-1}g(n,m;j)\delta (x-2\pi j/m).
\end{equation}
This imply (i). We prove (ii). We use the following symmetry
\begin{lem}
$G(t,x)$ defined by (1.4) satisfies 
\begin{align} 
&G(t,x+2t)=e^{i(x+t)}G(t,x)\\
&G(t,x)=G(t,-x)
\end{align}
as the sense of distributions.
\end{lem}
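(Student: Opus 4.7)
\medskip
\noindent\textbf{Proof proposal.}
My plan is to prove both identities by direct manipulation of the distributional Fourier series (1.4). Each of the operations appearing in the statement, namely pullback by the smooth diffeomorphism $x\mapsto x+2t$ (respectively $x\mapsto -x$) on $S^1$ and multiplication by the smooth function $e^{i(x+t)}$, is continuous on $\mathcal{D}'(S^1)$. Hence it suffices to verify the two identities on each Fourier mode $e_k(x)=e^{ikx}$ and then invoke linearity and distributional continuity; equivalently, I can pair both sides with an arbitrary test function $\phi\in C^\infty(S^1)$, expand $\phi$ in its Fourier series, and compare coefficients.

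The second identity (1.8) is the easier one. Starting from
\[
G(t,-x)=\frac{1}{2\pi}\sum_{k\in\Z}e^{-itk^2-ikx},
\]
I relabel $k\mapsto -k$; since $k^2$ is even and the sum is over all of $\Z$, I recover exactly the series defining $G(t,x)$. At the distributional level, this is the statement that the reflection $x\mapsto -x$ maps the Fourier basis $\{e_k\}$ to $\{e_{-k}\}$ while the coefficients $\tfrac{1}{2\pi}e^{-itk^2}$ are invariant under $k\mapsto -k$.

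The first identity (1.7) is proved by completing the square to shift the summation index. Formally,
\[
G(t,x+2t)=\frac{1}{2\pi}\sum_{k\in\Z}e^{-it k^2+ik(x+2t)}
=\frac{1}{2\pi}\sum_{k\in\Z}e^{-it(k-1)^2+it+ikx},
\]
and substituting $k=k'+1$ with $k'\in\Z$ collects an overall factor $e^{it}e^{ix}=e^{i(x+t)}$ in front of the series for $G(t,x)$. To make this rigorous without worrying about pointwise convergence, I test both sides against $\phi\in C^\infty(S^1)$ expanded as $\phi(x)=\sum_k\hat\phi(k)e^{ikx}$, rewrite $\la G(t,\cdot+2t),\phi\ra$ as $\la G(t,\cdot),\phi(\cdot-2t)\ra$, and note that the Fourier coefficients of $\phi(\cdot-2t)$ are $e^{-2itk}\hat\phi(k)$; a straightforward reindexing in the resulting (rapidly decaying in $\hat\phi$, hence absolutely convergent) sum yields the same pairing as $\la e^{i(\cdot+t)}G(t,\cdot),\phi\ra$.

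The only real subtlety is the lack of pointwise convergence of the series (1.4); everything must be carried out in $\mathcal{D}'(S^1)$. I do not anticipate a genuine obstacle: once the shift-and-reindex step is done on a single exponential, the continuity of pullback and of multiplication by a smooth function on $\mathcal{D}'(S^1)$ promotes the identity automatically to the full distribution $G(t,\cdot)$.
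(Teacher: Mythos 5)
Your proposal is correct and takes essentially the same route the paper intends: the paper dismisses the lemma with the single line ``The proof of this lemma is a direct computation,'' and your direct computation on the Fourier series (reindex $k\mapsto -k$ for the reflection identity, complete the square $-itk^2+2itk=-it(k-1)^2+it$ and shift $k\mapsto k+1$ for the translation identity) is exactly that computation, carried out honestly at the level of pairings against $C^\infty(S^1)$ test functions to justify the formal reindexing.
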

The proof of this lemma is a direct computation. Recall
$$G=e^{it\pl_{x}^2}\delta\in H^{\frac{1}{2}(S^1)+\epsilon}\backslash H^{\frac{1}{2}}(S^1)$$
for any $\epsilon>0$. Here $H^s(S^1)$, $s\in\R$ are usual Sobolev spaces.
Since $S^1$ is compact, there exists a point $x_0$ such that near this point $G(t,x)$ does not have $H^{\frac{1}{2}}$ regularity. So by $(1.7)$, $G(t,x)$ does not belong to $H^{\frac{1}{2}}$ near $x_0+2t$. Inductively we can prove similar singularity at $x_0+2kt$, $k\in\Z$. If $t/2\pi=\alpha\in\R\backslash\Q$,
 these are dense in $S^1$. Thus
$${\rm sing\ supp} (G(2\pi\alpha,x))=S^1.$$
\end{proof}
\end{exam}
\begin{remark}
In \cite{Ka}, Kakehi calculates $g(n,m;j)$. He shows that for $n,m\in\Z$, $m>0$, $n,m$: co-prime, 
\begin{equation}
\begin{split}
g(n,m;j)\not=0, \ \text{$\forall$j   if $\ m\equiv 1, \ 3$  mod  $4$,}\\
\begin{cases}
&g(n,m;j)=0, \ \text{for j:even}\\
&g(n,m;j)\not=0, \ \text{for  j:odd,} 
\end{cases} \text{  if  $m\equiv 2$ mod  $4$,}\\
\begin{cases}
&g(n,m;j)\not=0,\  \text{for j:even}\\
&g(n,m;j)=0, \ \text{for j:odd}\\
\end{cases}\text{   if  $m\equiv 0$ mod  $4$}.
\end{split}
\end{equation}
Moreover he generalized this example to compact symmetric spaces. 
\end{remark}
In this paper, we prove an analogous of this example. 
Before to say our result, we give a physical interpretation of this finiteness of the support. 
In quantum mechanics, the equation (1.3) represent the motion of the free particle. 
On $S^1$, the plain wave is given by 
\begin{equation*}
\frac{1}{2\pi}e^{-itk^2+ikx}=\frac{1}{2\pi}e^{-ik(tk-x)},\  k\in\Z. 
\end{equation*}
This is the motion whose velocity is $k$. This means in $S^1$ the velocity is quantized. 
So at the time $t$, the particle can reach the point 
\begin{equation*}
k-x\equiv 0\ \mod \ 2\pi\ , k\in\Z.
\end{equation*}
Taking $t=2\pi p/q$, this can be written as follows
\begin{equation*}
x\equiv 2\pi \frac{p}{q}k\ \mod \ 2\pi\ ,k\in\Z.
\end{equation*}
These are finite points. By this naive interpretation, it is natural to expect same phenomenon for Zoll manifolds since $\sqrt{-\Delta_M}$ has asymptotically integer eigenvalues and whose geodesics are periodic. The main result of this article is the following. 
\begin{thm}
Let M be a Zoll manifolds with period $2\pi$ and $x_0\in M$. We consider the following  equation
\begin{equation}
\begin{cases}
(i\pl_t+\Delta_M)u=0.\\
u|_{t=0}=\delta_{x_0}.
\end{cases}
\end{equation}
Here $\delta_{x_0}$ is the Dirac measure on $x_0$. The following two statement holds to this solution

(i) For fixed $t/2\in\Q$, ${\rm sing\ supp} (u(t, x))$ is degenerate i.e. subset of finite $d-1$ dimensional sub-manifolds.  

(ii) For fixed $t/2\pi\in\R\backslash\Q$, ${\rm sing\ supp}(u(t,x))$ is the whole space $M$
\end{thm}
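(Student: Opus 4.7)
My plan is to reduce the theorem to the one-dimensional analysis of Example 1.1 via the functional calculus of an operator with integer spectrum, as the abstract suggests. By the Weinstein--Colin de Verdi\`ere theorem, on a Zoll manifold with minimal period $2\pi$ the eigenvalues $\lambda_k$ of $\sqrt{-\Delta_M}$ satisfy $\lambda_k = n_k + \beta + O(1/n_k)$ for some $\beta\in\R$ and $n_k\in\Z$. I would define a bounded self-adjoint $B$ by the spectral calculus acting as $n_k+\beta$ on each eigenspace of $\Delta_M$. Then $B$ commutes with $\Delta_M$, its spectrum lies in $\Z+\beta$, and $R:=-\Delta_M-B^2$ is a bounded self-adjoint operator commuting with $B$. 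Hence $e^{it\Delta_M}=e^{-itB^2}\,e^{itR}$ with commuting factors, and everything reduces to understanding $e^{-itB^2}\delta_{x_0}$.

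For (i), fix $t=2\pi n/m$. Because $B$ has integer-valued spectrum (up to the shift $\beta$), the same Gauss-sum calculation that produced (1.5)--(1.6) yields a finite decomposition
\[
e^{-i2\pi(n/m)B^2}=\sum_{j=0}^{2m-1}\tilde g(n,m,\beta;j)\,e^{i(\pi j/m)B}
\]
as a linear combination of the half-wave propagators of $B$. Since $B$ has the same principal symbol as $\sqrt{-\Delta_M}$, Egorov's theorem and classical propagation of singularities show that $\mathrm{sing\,supp}\,e^{isB}\delta_{x_0}$ lies on the geodesic sphere of radius $s$ from $x_0$, a $(d{-}1)$-dimensional submanifold away from the conjugate locus. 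The factor $e^{itR}$ is microlocal (order zero), so it does not enlarge singular supports, and $\mathrm{sing\,supp}\,u(t,\cdot)$ is contained in a finite union of geodesic spheres.

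For (ii), fix $\alpha=t/2\pi\in\R\backslash\Q$. Following Example 1.1, I would first show that $u(t,\cdot)\notin H^{(d-1)/2}(M)$ so that, by compactness of $M$, some $y_0\in M$ lies in $\mathrm{sing\,supp}\,u(t,\cdot)$. Next, for each closed unit-speed geodesic $\gamma$ through $x_0$, I would establish a symmetry of the form
\[
u(t,\gamma(s+2t))=e^{i\Phi(s,t)}\,u(t,\gamma(s))\quad\text{modulo }C^\infty,
\]
obtained by shifting the integer index $k\mapsto k+1$ in the $B$-spectral decomposition, the analog of conjugation by $e^{ix}$ on $S^1$. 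Iterating yields singularities at $\gamma(s_0+2kt)$ for all $k\in\Z$, a dense subset of $\gamma$. Since every point of $M$ lies on some closed geodesic through $x_0$, the singular support is dense, hence equal to $M$ by closedness.

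The main obstacle will be making the shift symmetry in (ii) precise. On $S^1$ the symmetry arose from the global unitary multiplication $e^{ix}$, whose spectral action shifts Fourier modes by one; no canonical analog exists on a general Zoll manifold. I expect the shift must be constructed microlocally, either by restricting to each geodesic through $x_0$ and reducing to an $S^1$ problem via Egorov's theorem applied to $e^{isB}$, or by building an approximate ladder operator intertwining the spectral clusters of $B$. A secondary technical point is verifying that $e^{itR}$, defined through the functional calculus of $\Delta_M$ rather than as a classical pseudodifferential operator, is genuinely microlocal; this should follow from the asymptotic expansion of $\lambda_k$ and the fact that $R$ is a function of $\Delta_M$ with symbol bounded on the $\Z+\beta$ clusters.
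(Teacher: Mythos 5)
Your plan for part (i) is essentially the paper's: reduce $e^{it\Delta_M}$ to $e^{-itL^2}$ for a self-adjoint pseudodifferential $L$ with integer (or rational-shifted) spectrum commuting with $\Delta_M$, expand $e^{-i2\pi(n/m)L^2}$ as a Gauss-sum combination of half-wave propagators $e^{i(2\pi j/m)L}$, and apply propagation of singularities to conclude the singular support is carried on a finite union of geodesic spheres. Two points of care: first, the spectrally defined $B$ (acting as $n_k+\beta$ on each cluster) being a genuine $\Psi DO$ with principal symbol $|\xi|_g$ is not automatic from the eigenvalue asymptotics alone — this is precisely the content of the Colin de Verdi\`ere--Duistermaat--Guillemin--Weinstein theorem (the paper's Proposition~2.2, citing H\"ormander Lemma~29.2.1), and without that input neither Egorov's theorem nor propagation of singularities applies to $e^{isB}$. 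Second, the sign in your factorization should be $e^{it\Delta_M}=e^{-itB^2}e^{-itR}$ (not $e^{+itR}$), and the remainder the paper actually obtains (its $Q_2=\alpha\sqrt{-\Delta_M}/2+\tilde P_s+\tilde E$) is of order~1, not bounded, because it uses exactly integer spectrum rather than $\Z+\beta$; either normalization works but the bookkeeping differs.

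For part (ii) you have correctly identified the gap but not closed it, and the obstacle is genuine: there is no natural analog on a Zoll manifold of the multiplication operator $e^{ix}$ whose conjugation action shifts Fourier modes, so the symmetry $u(t,\gamma(s+2t))\equiv e^{i\Phi}u(t,\gamma(s))$ mod $C^\8$ that you posit on each geodesic is not something you have a construction for, and a ladder-operator approach intertwining spectral clusters is itself a substantial project. The paper's resolution is different and sidesteps this entirely: rather than look for a shift symmetry on $M$, it keeps the shift (1.7) confined to the one-dimensional model. The mechanism is Propositions~4.4 and~4.6: one writes $e^{\pm iyL}\delta_{x_0}$ in normal polar coordinates $(r,\theta)$ as a conormal distribution in $r-y$ with elliptic amplitude, and then the functional-calculus representation (2.13),
\begin{equation*}
e^{-itL^2}=\la L\ra^{N}\frac{1}{2\pi}\int^{2\pi}_0\la D_y\ra^{-N}G(t,y)\,\cos(iyL)\,dy,
\end{equation*}
lets one trade the $y$-integral for an elliptic $\Psi DO$ $B(\theta)$ in the radial variable (with $\theta$ a smooth parameter), giving $\rho\, e^{-itL^2}\delta_{x_0}\equiv\la L\ra^N B(\theta)\bigl(\tilde\rho(y)\la D_y\ra^{-N}G(t,y)\bigr)$. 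Singularities of $u$ near $x_1$ are thereby read off from singularities of $G(t,\cdot)$ on $S^1$, and the symmetry (1.7) is applied to $G$ (combined with a change of variable $y\mapsto y-4\pi\alpha$ in the integral and a propagation step by $e^{\pm i4\pi\alpha L}$), which is where the density of the singular set comes from. Without some version of this reduction, your outline for (ii) does not constitute a proof.
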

\begin{remark}
Moreover we compute the wave front set of the solution. We can also compute the wave front set to the equation with a first order perturbation. 
For more precise statement, see Corollary 3.4 and Theorem 4.1.
\end{remark}
We should remark some related results. In $R^n$, the singularity propagates at infinite speed and the smoothing effect may occur. 
There are many results on the singularity of the solution to the Schr$\rm \ddot o$dinger equation related to the  smoothing effect. 
For example, if the classical trajectories are not trapped and the potential decays faster than the quadratic order, then 
the solution with compact support initial data is smooth at $t>0$ see e.g. \cite{Na}. 
On the other hand to compact manifolds, there are few works to the study of the singularity except the work \cite{Ka}. 
On $\R^1$, Yajima \cite{Ya} proved fundamental solution is singular on the whole space time, to the super quadratic potential. 
Our result indicates similar result on Zoll manifolds. 

\section{Preliminaries}
In this paper,  we use the microlocal analysis on manifolds. We recall some relevant notations, see \cite{Gr-Sj}, \cite{Ho}.
$S^m(T^*M)$, $m\in\R$ is the space of functions $a(x,\xi)$ on $T^*M$, which are smooth in $(x, \xi)$ and satisfy
$$\pl^\alpha _x \pl^\beta _\xi a(x,\xi)= {\cal O}((1+|\xi|)^{m-|\beta|})\ ,  (x,\xi) \in T^*M.$$
We also write $S^{-\8}(T^*M)=\cap_m S^m(T^*M).$
If $a\in S^m(T^*M)$, we can define correspondent pseudo-differential operators, for short $\Psi DO$, by usual way. 
$\Psi^m(M)$ denotes the class of $\Psi DO$ associated with this class. The smoothing operator is a operator which has smooth kernel. A operator in $\Psi^{-\8}(M)$ is the smoothing operator.
It is useful to consider the operator as the operator on the half-density bundle. 
For the operator $A\in \Psi^m(M)$ and Riemannian density $\rho$
$$Bu=\rho^{\frac{1}{2}}A(u\rho^{-\frac{1}{2}}),\ u\in C^\8(M, \Omega^\frac{1}{2})$$
defines $B$, the operator on the half-density bundle. 
We write this class of operator as $\Psi^m(M;\Omega^\frac{1}{2}, \Omega^\frac{1}{2})$. By this identity, we identify the operator on $M$ with the operator on the half-density bundle. 

For $A\in \Psi^m(M;\Omega^\frac{1}{2}, \Omega^\frac{1}{2})$, we have the symbol map
$\sigma(A)\in S^m(T^*M)/S^{m-2}(T^*M)$ and there exist $a_0 \in S^m(T^*M) $, $a_1 \in S^{m-1}(T^*M) $ such that $\sigma(A)-a_0-a_1\in S^{m-2}(T^*M) $. We call $a_0=\sigma_{pri}(A)$ the principal symbol of $A$ and $a_1=\sigma_{sub}(A)$ the sub-principal symbol of $A$. 
Modifying above definition, $\Psi_{phg}^m(M; \Omega^\frac{1}{2}, \Omega^\frac{1}{2})$ denotes the class of $\Psi DO$ whose principal and sub-principal symbols are polyhomogeneous functions of order $m$ and $m-1$ respectively. $\sqrt{-\Delta_M}$ is a $\Psi DO$ $\in \Psi^{1}_{phg}(M; \Omega^\frac{1}{2}, \Omega^\frac{1}{2})$ whose principal symbol is $(\sum_{i,j=1}^d  g^{ij}(x)\xi_i\xi_j)^{1/2}$ and sub-principal symbol is $0$. 

We also use the notation of the wave front set. If $A\in\Psi_{phg}^0(M)$ has the principal symbol $a$, the characteristic set of $A$ is given by 
$${\rm Char} A=\{(x,\xi)\in T^*M\backslash \{0\}, a=0\}$$
We define the wave front set of $u \in {\cal D'}(M)$ by
$${\rm WF}(u)=\cap \{{\rm Char } A; A\in \Psi_{phg}^0(M), Au\in C^\8(M)\}$$
where  ${\cal D'}(M)$ is the space of distributions on $M$. By definition, the wave front set is a conic subset of $T^*M\backslash \{0\}$. 
The  wave front set is a refinement of the singular support, if $\Pi_x(x,\xi)$ is the projection to $x$ variable then
$$\Pi_x( {\rm WF}(u))={\rm sing \ supp} (u).$$
We use the following Propagation of singularity to the first order operator. 
\begin{prop}
If $A\in \Psi^1(M;\Omega^\frac{1}{2}, \Omega^\frac{1}{2})$ has real-principal symbol. We have
\begin{equation}
{\rm WF}( e^{itA}u)={\exp}(tH_a){\rm WF}(u),   \text{$  \forall u \in {\cal D'}(M)$}.
\end{equation}
Here ${\rm exp}(t H_a )$is the Hamilton flows for the symbol $a=\sigma_{pri}(A)$.
\end{prop}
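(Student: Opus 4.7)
The plan is to derive the proposition from Egorov's theorem for the unitary group $e^{itA}$ combined with the pseudodifferential definition of the wave front set. By the group property $e^{itA}\circ e^{-itA}=I$, applying the assertion to $v=e^{itA}u$ with $t$ replaced by $-t$ yields the reverse inclusion, so it suffices to prove one containment; I would prove it by a microlocal contrapositive.

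Concretely, fix $(y_0,\eta_0)\notin\mathrm{WF}(e^{itA}u)$ and choose $D\in\Psi_{phg}^0(M;\Omega^{1/2},\Omega^{1/2})$ elliptic at $(y_0,\eta_0)$ with $De^{itA}u\in C^{\infty}(M)$. Conjugate by the propagator and set $C:=e^{-itA}De^{itA}$. Egorov's theorem asserts that $C\in\Psi_{phg}^0$ and that its principal symbol is the pull-back of $\sigma_{pri}(D)$ by the Hamilton flow of $a=\sigma_{pri}(A)$; hence $C$ is elliptic at the point obtained from $(y_0,\eta_0)$ by applying $\exp(tH_a)$. In the applications we have in mind $A$ is self-adjoint (e.g.\ $A=\sqrt{-\Delta_M}$), so the spectral theorem shows that $e^{\pm itA}$ preserves $C^{\infty}(M)$; therefore $Cu=e^{-itA}(De^{itA}u)\in C^{\infty}(M)$, and by definition of the wave front set the flowed point is not in $\mathrm{WF}(u)$. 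This is exactly the contrapositive we wanted.

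The substantive ingredient is Egorov's theorem itself. One proves it by writing $C(t):=e^{-itA}De^{itA}$, differentiating to get the Heisenberg-type equation $\dot C(t)=-i[A,C(t)]$, and constructing $C(t)$ modulo $\Psi^{-\infty}$ through an asymptotic expansion $\sigma(C(t))\sim\sum_{j\geq 0}c_j$ with $c_j\in S^{-j}$. The principal symbol $c_0$ satisfies a linear transport equation along the Hamilton flow of $a$, with $c_0|_{t=0}=\sigma_{pri}(D)$, and the lower-order terms solve inhomogeneous transport equations of the same type. The real principal-symbol hypothesis is precisely what makes these transport equations first-order linear ODEs along real, forward- and backward-complete integral curves of $H_a$ on $T^*M\setminus\{0\}$, guaranteeing global solvability in $t$ and that each $c_j$ remains in the right symbol class. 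Once Egorov is available the propagation statement follows essentially formally; the only additional observation is that conjugation by the unitary $e^{itA}$ carries $\Psi^{-\infty}$ into itself, which is routine.
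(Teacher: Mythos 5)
The paper does not actually prove Proposition 2.1; it is stated as background and delegated to \cite{Gr-Sj}, \cite{Ho}. Your Egorov-based argument is precisely the textbook proof those references give, so there is no competing proof in the paper to compare against, and your overall plan --- reduce to one inclusion by the group property, prove that inclusion by conjugating a microlocal cutoff through the propagator, identify the conjugate via Egorov, and prove Egorov by the Heisenberg equation $\dot C=-i[A,C]$ and transport along $H_a$ --- is correct and standard. Two caveats worth recording. First, as you note yourself, the hypothesis ``real principal symbol'' alone does not make $e^{itA}$ a well-defined group on $\mathcal{D}'(M)$ that preserves $C^\infty(M)$; your step $Cu=e^{-itA}(De^{itA}u)\in C^\infty$ genuinely uses that $A$ is (essentially) self-adjoint, which in the paper's applications ($L$, $\sqrt{-\Delta_M}$, $Q_1$, $Q_2$) is always the case, so the statement tacitly carries that hypothesis. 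Second, be careful about the direction of the flow: with the conventions $H_a=\partial_\xi a\cdot\partial_x-\partial_x a\cdot\partial_\xi$ and $\sigma([A,C])=\tfrac{1}{i}\{a,\sigma(C)\}$, the operator $C(t)=e^{-itA}De^{itA}$ has principal symbol $\sigma(D)\circ\exp(tH_a)$, hence is elliptic at $\exp(-tH_a)(y_0,\eta_0)$ rather than at $\exp(tH_a)(y_0,\eta_0)$. Because of your group-property reduction this sign does not affect the final equality, but as written your contrapositive produces the inclusion $\exp(tH_a)\mathrm{WF}(u)\subset\mathrm{WF}(e^{itA}u)$ and the other containment then follows by replacing $t\mapsto -t$; stating the sign consistently would make the argument cleaner.
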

We use a functional calculus to the self-adjoint operator with integer eigenvalues. By the following proposition, we can reduce 
$\sqrt{-\Delta_M}$ to this type operator. This result is due to Colin de Verdi$\rm\grave e$re, Duistermaat, Guillemin, and Weinstein-see \cite{Ho} and \cite{Gu2} for expositions of the spectral theory of the Laplace-Bertrami Operator on Zoll manifolds. 

\begin{prop}
Let M be a Zoll manifolds with period $2\pi$. 
Then there exists a integer $\alpha$ and a self-adjoint operator $E \in \Psi^{-1}_{phg}(M; \Omega^\frac{1}{2}, \Omega^\frac{1}{2})$ which commute with $\Delta_M$ such that
\begin{equation}
{\rm Spec}(\sqrt{-\Delta_M}+\alpha/4 +E)\subset \Z, 
\end{equation}
\end{prop}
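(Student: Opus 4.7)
The plan is to use the unitary group $U(t)=e^{itP}$ with $P:=\sqrt{-\Delta_M}$, exploit the Zoll condition to turn $U(2\pi)$ into a pseudo-differential operator, and then take a logarithm of it to produce the correction $E$.

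First I would recall that $U(t)$ is a Fourier integral operator of order $0$ associated with the time-$t$ Hamiltonian flow of $p(x,\xi)=(g^{ij}\xi_i\xi_j)^{1/2}$, which is the homogeneous lift of the geodesic flow. Because $M$ is Zoll with common period $2\pi$, this flow at $t=2\pi$ is the identity on $T^*M\setminus\{0\}$, so by Egorov's theorem $U(2\pi)\in\Psi^{0}_{phg}(M;\Omega^{1/2},\Omega^{1/2})$. A transport-equation computation along the closed orbits, using $\sigma_{sub}(P)=0$ so that only the Keller--Maslov contribution survives, shows that the principal symbol of $U(2\pi)$ is the \emph{constant} $e^{-i\pi\alpha/2}$ for some integer $\alpha$ (the common Maslov index of the closed geodesics). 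Setting $Q:=e^{i\pi\alpha/2}U(2\pi)$ gives a unitary operator in $\Psi^{0}_{phg}$ with principal symbol $1$, so $Q-I\in\Psi^{-1}_{phg}$.

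Second, since $Q$ is unitary and $Q-I$ has negative order, its spectrum clusters only at $1$, and I would define
\[
E\;:=\;\frac{i}{2\pi}\log Q
\]
via the holomorphic functional calculus, choosing the branch with $\log 1=0$. The asymptotic expansion $\log Q\sim (Q-I)-\tfrac12(Q-I)^2+\cdots$ together with the polyhomogeneity of $Q$ realizes $E\in\Psi^{-1}_{phg}(M;\Omega^{1/2},\Omega^{1/2})$; unitarity of $Q$ (so $Q^{*}=Q^{-1}$) forces $E^{*}=E$; and since $E$ is a function of $Q$, which in turn is a function of $P$, one has $[E,\Delta_M]=0$. Using $[P,E]=0$ one then computes
\[
e^{2\pi i(P+\alpha/4+E)}\;=\;U(2\pi)\,e^{i\pi\alpha/2}\,e^{2\pi i E}\;=\;Q\cdot Q^{-1}\;=\;I,
\]
so every eigenvalue $\lambda$ of $P+\alpha/4+E$ satisfies $e^{2\pi i\lambda}=1$, i.e. $\lambda\in\Z$.

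The main obstacle I expect is the first step: verifying that the principal symbol of $U(2\pi)$ reduces to a single scalar $e^{-i\pi\alpha/2}$ with an \emph{integer} $\alpha$ independent of the closed orbit. This is the heart of the Colin de Verdi\`ere--Duistermaat--Guillemin--Weinstein clustering theory and relies crucially on the Zoll hypothesis, which makes the Poincar\'e map trivial on every closed geodesic so that the Maslov indices of all orbits must coincide. The remaining ingredients --- polyhomogeneity of $\log Q$, self-adjointness of $E$, and the exponential identity above --- are then routine applications of the symbolic calculus.
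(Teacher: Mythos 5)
The paper supplies no proof of its own for this proposition; it simply cites Lemma 29.2.1 of H\"ormander \cite{Ho} and identifies $\alpha$ as the Maslov index. Your sketch is a correct outline of the standard Colin de Verdi\`ere--Duistermaat--Guillemin--Weinstein argument that lies behind that citation: Egorov plus the Zoll hypothesis puts $U(2\pi)$ in $\Psi^{0}_{phg}$, connectedness of the space of closed geodesics makes the Maslov phase $e^{-i\pi\alpha/2}$ a single orbit-independent constant, and taking a logarithm of the normalized operator $Q$ produces the order $-1$ correction; the exponential identity $e^{2\pi i(P+\alpha/4+E)}=Q\,Q^{-1}=I$ then forces the spectrum into $\Z$, and $[E,\Delta_M]=0$ because $E$ is a function of $\sqrt{-\Delta_M}$. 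The one step you should make more explicit is the construction of $\log Q$: $Q$ is unitary and $Q-I$ is compact, so the spectrum of $Q$ lies on the unit circle accumulating only at $1$, but nothing prevents finitely many eigenvalues from lying near $-1$, where the principal branch is discontinuous and where $\|Q-I\|<1$ fails. One therefore defines $E$ spectrally, eigenspace by eigenspace, using $\theta\in(-\pi,\pi]$, and observes that the discrepancy between this and the $\Psi^{-1}_{phg}$ operator obtained by asymptotically summing $(Q-I)-\tfrac12(Q-I)^2+\cdots$ is a finite-rank, hence smoothing, operator; this gives $E\in\Psi^{-1}_{phg}$ and self-adjointness simultaneously. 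With that caveat filled in, your argument is complete and matches the cited source.
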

For the proof, see Lemma 29.2.1 in \cite{Ho}. $\alpha$ is so called the Maslov index, in our case this is the number of conjugate points in a closed geodesic counted with multiplicity.    
By this proposition, we should consider the self-adjoint operator with integer eigenvalues. 
The following special case of the functional calculus is essential to our results. 
\begin{prop}
Let $L$ be a self-adjoint operator on the Hilbert space H and $f$ be a function $\Z\rightarrow \C$. We assume ${\rm Spec}(L)\subset \Z $. Then we have the identity
\begin{equation}
f(L)=\sum_{k\in\Z}\frac{1}{2\pi}\int^{2\pi}_0f(k)e^{iky}e^{-iyL}dy
\end{equation}
\end{prop}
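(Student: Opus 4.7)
The plan is to apply the spectral theorem to $L$ and reduce the identity to Fourier orthogonality on $\R/2\pi\Z$. Since $L$ is self-adjoint with integer spectrum, $H$ decomposes as $H=\bigoplus_{n\in\Z}H_n$ with $H_n=\ker(L-n)$, and orthogonal projections $P_n$ onto these eigenspaces. The spectral theorem then gives $e^{-iyL}=\sum_{n\in\Z}e^{-iyn}P_n$ in the strong operator topology, and by the Borel calculus $f(L)=\sum_{k\in\Z}f(k)P_k$ on the natural domain of $f(L)$.

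The core observation is that the inner integral on the right-hand side of (2.3) recovers the spectral projection $P_k$. Indeed, testing on $u\in H_n$ one has $e^{-iyL}u=e^{-iyn}u$, so
\begin{equation*}
\frac{1}{2\pi}\int_0^{2\pi}e^{iky}e^{-iyL}u\,dy=\left(\frac{1}{2\pi}\int_0^{2\pi}e^{i(k-n)y}\,dy\right)u=\delta_{kn}u=P_ku.
\end{equation*}
By linearity this extends to any finite linear combination of eigenvectors; multiplying by $f(k)$, summing over $k\in\Z$, and comparing with $f(L)=\sum_k f(k)P_k$ recovers (2.3) on that subspace.

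The remaining work is the routine extension from the dense subspace of finite eigenvector combinations to all of $H$ (or to $\mathrm{dom}(f(L))$ when $f$ is unbounded). Since $e^{-iyL}$ is unitary, the Bochner integrand is norm-bounded by $\|u\|$, so the integral is well defined and the interchange of sum with integral is justified by dominated convergence applied to the partial sums $\sum_{|n|\le N}e^{-iyn}P_nu$, which converge to $e^{-iyL}u$ in $H$ uniformly in $y$. I do not anticipate any real obstacle; the main conceptual content is simply recognising that the $y$-integral is a Fourier-type inversion producing the spectral projection onto the integer eigenvalue $k$.
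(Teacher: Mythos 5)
Your proof is correct and follows essentially the same route as the paper: both rest on the spectral decompositions $f(L)=\sum_k f(k)P_k$ and $e^{-iyL}=\sum_k e^{-iyk}P_k$ together with the Fourier orthogonality $\frac{1}{2\pi}\int_0^{2\pi}e^{i(k-l)y}dy=\delta_{k,l}$; you have simply spelled out the density/convergence details that the paper leaves implicit.
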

\begin{proof}
This is proved by  
\begin{equation}
\frac{1}{2\pi}\int^{2\pi}_0e^{i(k-l)y}dy=\delta_{k,l}
\end{equation}
and the following identity
\begin{align}
f(L)=\sum_{k\in\Z}f(k)P_k, e^{-iyL}=\sum_{k\in\Z}e^{-iyk}P_k.
\end{align}
Here $\delta_{k,l}=1$ if $k=l$ and otherwise $\delta_{k,l}=0$, $P_k$, $k\in\Z$ are orthogonal projections to the space of eigenfunctions with eigenvalue $k$ i.e. 
\begin{equation}
\begin{split}
\begin{cases}
P_k: \text{projections to eigenfunctions with eigenvalue $k$},\\
 \ \ \ \  \text{if $k$ is an eigenvalue of $L$}\\
P_k: \ 0, \ \text{if $k$ is not an eigenvalue of $L$}.
\end{cases}
\end{split}
\end{equation}
\end{proof}
In the above identity, we shall change the sum and integral sign. 
We use self-adjoint operators $\la D_x\ra^{s}$ on $L^2(S^1)$ defined
 by the Fourier series, 
\begin{equation}
\la D_x\ra^{s}f(x)=\sum_{k\in\Z}\la k\ra^sf_ke^{ikx}.
\end{equation}
Here $\la k\ra=(k^2+1)^{1/2}$, $s\in\R$ and $f(x)=\sum_{k\in\Z} f_ke^{ikx}$. 
This definition also can apply to the distribution on the $S^1$. We also define linear operators on $H$
\begin{equation}
\la L\ra^{s}=\sum_{k\in\Z}\la k\ra^sP_k.
\end{equation}
Since $P_k$ are orthogonal projections, we get
\begin{equation}
\begin{split}
\la D_y\ra^{N}& e^{-iyL}=\sum_{l\in\Z} \la l\ra^{N}P_le^{-ily}\\
&=\sum_{l\in\Z}\la l\ra^{N}P_l  \sum_{j\in\Z}P_je^{-ijy}= \la L\ra^{N} e^{-iyL}.
\end{split}
\end{equation}
\begin{prop}
Let $L$ be a self-adjoint operator on the Hilbert space H and $f$ be a function $\Z\rightarrow \C$. We assume ${\rm Spec}(L)\subset \Z $. Then we have the identity
\begin{equation}
f(L)=\la L\ra^{N}\frac{1}{2\pi}\int^{2\pi}_0\sum_{k\in\Z}\la k\ra^{-N}f(k)e^{iky} e^{-iyL}dy
\end{equation}
if $\sum_{k\in\Z}\la k\ra^{-N}f(k)e^{iky}$ is absolute uniform convergent for some $N\in\R$.
\end{prop}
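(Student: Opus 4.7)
The plan is to start with the right-hand side and reduce it to the identity of Proposition 2.3 by exchanging the sum over $k$ with the integral in $y$. Since $e^{-iyL}$ is unitary, each term of the operator-valued series satisfies $\|\la k\ra^{-N}f(k)e^{iky}e^{-iyL}\|=|\la k\ra^{-N}f(k)|$ in operator norm, so the hypothesis that $\sum_{k}\la k\ra^{-N}f(k)e^{iky}$ converges absolutely and uniformly in $y$ implies that the operator-valued series $\sum_{k\in\Z}\la k\ra^{-N}f(k)e^{iky}e^{-iyL}$ converges in operator norm uniformly in $y\in[0,2\pi]$. This is enough to invoke Fubini for the Bochner integral and swap the sum with $\int_0^{2\pi}\cdots dy$.

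After the swap, the right-hand side becomes
$$\la L\ra^{N}\sum_{k\in\Z}\la k\ra^{-N}f(k)\cdot\frac{1}{2\pi}\int_0^{2\pi}e^{iky}e^{-iyL}\,dy.$$
Inserting the spectral representation $e^{-iyL}=\sum_{j\in\Z}e^{-ijy}P_j$ from (2.5) and using the orthogonality relation (2.4), each inner integral collapses to the projection $P_k$, so the expression simplifies to $\la L\ra^{N}\sum_{k\in\Z}\la k\ra^{-N}f(k)P_k$. The definition (2.8) then gives $\la L\ra^{N}P_k=\la k\ra^{N}P_k$, the weights $\la k\ra^{\pm N}$ cancel, and we are left with $\sum_{k\in\Z}f(k)P_k=f(L)$, which is the desired identity.

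The only step I expect to require genuine care is the operator-norm convergence that justifies the Fubini exchange; once that is in place, the remainder is algebraic cancellation from the spectral decomposition combined with Proposition 2.3. A minor technical subtlety is that $\la L\ra^{N}$ may be unbounded when $N>0$, but because it acts diagonally on the eigenspaces of $L$ it commutes with the sum $\sum_k\la k\ra^{-N}f(k)P_k$ in the natural sense on the domain of $f(L)$, so the final cancellation step is well-posed.
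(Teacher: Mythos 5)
Your proof is correct and follows essentially the same route as the paper: both rely on the orthogonality relation (2.4), the spectral decomposition (2.5), the identity $\la L\ra^N P_k=\la k\ra^N P_k$ from (2.8), and the absolute uniform convergence to justify swapping the sum and integral. The only cosmetic difference is direction — you reduce the right-hand side to $\sum_k f(k)P_k=f(L)$, whereas the paper starts from Proposition 2.3's identity for $f(L)$, inserts $1=\la k\ra^{-N}\la k\ra^N$, and pushes the weight through $e^{-iyL}$ using (2.8); the underlying content is identical, and your remark on operator-norm convergence and the domain issue for unbounded $\la L\ra^N$ appropriately addresses the only genuinely delicate points.
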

\begin{proof}
By Proposition 2.2 and (2.3), we have
\begin{equation}
\begin{split}
f(L)&=\sum_{k\in\Z}\frac{1}{2\pi}\int^{2\pi}_0 \la k\ra^{-N}f(k)e^{iky} \la k\ra^{N}\sum_{l\in\Z}P_le^{-ily}dy\\
&=\sum_{k\in\Z}\frac{1}{2\pi}\int^{2\pi}_0 \la k\ra^{-N}f(k)e^{iky} \sum_{l\in\Z} \la l\ra^{N}P_le^{-ily}dy\\
\end{split}
\end{equation}
Here $P_k$ are defined by (2.5). From (2.8), we get
\begin{align*}
f(L)=\la L\ra^{N}\sum_{k\in\Z}\frac{1}{2\pi}\int^{2\pi}_0\la k\ra^{-N}f(k)e^{-iyL}dy.
\end{align*}
Since $\sum_{k\in\Z}\la k\ra^{-N}f(k)e^{iky}$ is absolute uniform convergent, we can change the sum and integral
\begin{align*}
f(L)=\la L\ra^{N}\frac{1}{2\pi}\int^{2\pi}_0\sum_{k\in\Z}\la k\ra^{-N}f(k)e^{iky} e^{-iyL}dy.
\end{align*}
\end{proof}
Especially we have the identity
\begin{equation}
\begin{split}
e^{-itL^2}&=\la L\ra^{N}\frac{1}{2\pi}\int^{2\pi}_0\la D_y\ra^{-N}G(t,y)e^{-iyL}dy
\end{split}
\end{equation}
for $G(t,y)$ defined by (1.4) and $N>1$. By (1.8), this can be written as follows
\begin{equation}
\begin{split}
e^{-itL^2}&=\la L\ra^{N}\frac{1}{2\pi}\int^{\pi}_0 \la D_y\ra^{-N}G(t,y)(e^{iyL}+e^{-iyL})dy\\
&=\la L\ra^{N}\frac{1}{2\pi}\int^{2\pi}_0\la D_y\ra^{-N}G(t,y){\rm cos}(iyL)  dy
\end{split}
\end{equation}
The following proposition is an abstract form of Theorem 1.3 (i). 
\begin{prop}
Let $L$ be a self-adjoint operator on the Hilbert space H. We assume ${\rm Spec}(L)\subset \Z $. 
Then for the time $t= 2\pi n/m \in 2\pi\Q$, $n,m\in\Z$, we have the following identity, 
\begin{equation}
U(2\pi n/m)=\sum_{j=0}^{m-1}g(n,m;j)V(2\pi j/m)
\end{equation}
Here $U(t)=e^{-itL^2}$, $V(t)=e^{-itL}$ and $g(n,m,j)$ is defined by (1.5).
\end{prop}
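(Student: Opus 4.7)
The plan is to reduce the operator identity (2.14) to a pointwise identity at each integer eigenvalue of $L$, mirroring the Fourier-series manipulation used to prove the $S^1$ case in Example 1.1.

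Since ${\rm Spec}(L)\subset\Z$, the spectral decomposition (2.5) applied to $f(k)=e^{-i(2\pi n/m)k^2}$ and to $f(k)=e^{-i(2\pi j/m)k}$ gives
\begin{equation*}
U(2\pi n/m)=\sum_{k\in\Z}e^{-i(2\pi n/m)k^2}P_k,\qquad V(2\pi j/m)=\sum_{k\in\Z}e^{-i(2\pi j/m)k}P_k,
\end{equation*}
both as strong limits in $H$. By linearity and mutual orthogonality of the $P_k$, the identity (2.14) reduces to the scalar identity
\begin{equation*}
e^{-i(2\pi n/m)k^2}=\sum_{j=0}^{m-1}g(n,m;j)\,e^{-i(2\pi j/m)k}\qquad(k\in\Z).
\end{equation*}

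To verify this, I would write $k=mq+l$ with $q\in\Z$ and $l\in\{0,1,\dots,m-1\}$. Since $(mq+l)^2-l^2\in m\Z$ and $mq\cdot j\in\Z$, both exponentials reduce modulo $2\pi$ to their values at $l$ in place of $k$, so it suffices to check
\begin{equation*}
e^{-i(2\pi n/m)l^2}=\sum_{j=0}^{m-1}g(n,m;j)\,e^{-i(2\pi j/m)l}.
\end{equation*}
Substituting the definition (1.5) of $g(n,m;j)$, interchanging the double sum, and invoking the discrete orthogonality $\sum_{j=0}^{m-1}e^{2\pi ij(l'-l)/m}=m\,\delta_{l,l'}$ collapses the inner sum and returns exactly $e^{-i(2\pi n/m)l^2}$. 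This is the abstract counterpart of the Poisson-summation step in the $S^1$ proof of Example 1.1.

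An alternative route, more in the spirit of the integral representation (2.12), is to set $t=2\pi n/m$ in (2.12) and insert the Dirac-measure expansion (1.6) of $G(2\pi n/m,y)$; after transferring $\la D_y\ra^{-N}$ onto $e^{-iyL}$ via (2.9) so that the prefactor $\la L\ra^N$ cancels, the delta measures evaluate the integral and (2.14) follows immediately. The only technical nuisance of this second path is justifying the distributional pairing of $G(2\pi n/m,\cdot)$ with the operator-valued function $y\mapsto e^{-iyL}$, a subtlety that the spectral-projection argument above sidesteps. I therefore expect no substantive obstacle for either approach; the content lies entirely in recognizing the finite discrete Fourier transform already performed in the computation of $g(n,m;j)$.
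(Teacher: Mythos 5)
Your primary argument is correct, and it is genuinely different from the proof the paper gives. The paper proves this proposition by specializing the integral representation~(2.12) at $t=2\pi n/m$, inserting the delta-measure expansion~(1.6) of $G(2\pi n/m,y)$, transferring $\la D_y\ra^{N}$ from $\la L\ra^{N}e^{-iyL}$ back against $\la D_y\ra^{-N}$, and then evaluating the $y$-integral against the deltas --- this is exactly your ``alternative route,'' concern about the distributional pairing included. Your main route instead bypasses (2.12) entirely: you use the spectral resolution~(2.5) to reduce the operator identity to the scalar identity $e^{-2\pi ink^2/m}=\sum_{j=0}^{m-1}g(n,m;j)e^{-2\pi ijk/m}$ for $k\in\Z$, reduce $k$ modulo $m$, and close it by the discrete orthogonality $\sum_{j=0}^{m-1}e^{2\pi ij(l'-l)/m}=m\,\delta_{l,l'}$ (your reference to Poisson summation is slightly off --- this is finite Fourier inversion, not Poisson --- but the computation is right). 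This is precisely the ``elementary linear algebra'' proof that the paper alludes to in Remark~2.7 via Lemma~2.8 but does not write out: your discrete orthogonality computation is the explicit inversion of the DFT matrix $(e^{-2\pi ijl/m})$ that produces the matrix $(a_{ij})$ of that lemma. What the paper's route buys is uniformity with the later analysis, since (2.12) is reused heavily in Sections~3 and~4 and the proposition then comes for free; what your route buys is self-containedness and transparency, needing nothing beyond the spectral theorem and a finite geometric sum.
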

\begin{proof}
By using the identity (2.12) and (1.6), we have 
\begin{align*}
U(2\pi n/m)&=\frac{1}{2\pi}\int^{2\pi}_0\la D_y\ra^{-N}\sum_{j=0}^{m-1}g(n,m;j)\delta (x-2\pi j/m) \la L\ra^{N}e^{-iyL}dy\\
&=\frac{1}{2\pi}\int^{2\pi}_0\la D_y\ra^{-N}\sum_{j=0}^{m-1}g(n,m;j)\delta (x-2\pi j/m) \la D_y\ra^{N}e^{-iyL}dy\\
&=\frac{1}{2\pi}\int^{2\pi}_0\sum_{j=0}^{m-1}g(n,m;j)\delta (x-2\pi j/m) e^{-iyL}dy\\
&=\sum_{j=0}^{m-1}g(n,m;j)V(2\pi j/m).
\end{align*}
\end{proof}
\begin{remark}
By (2.13), we can prove the similar result for $V(t)={\rm cos}(itL)$.
\end{remark}
\begin{remark}
We can prove this proposition elementary from the following lemma which is proved by elementary linear algebra. 
\begin{lem}\label{}
Let $L$ and $V(t)$ be as above and $\mathbf{P}_l=\sum_{j\in\Z} P_{mj+l}.$. There exists an invertible
 $m \times m$ complex matrix $(a_{ij})$, $a_{ij}\in\C$ such that the following identity holds
\begin{equation}
\mathbf{P}_i=\sum_{j=0}^{m-1}a_{ij}V(2\pi j/m).
\end{equation}
\end{lem}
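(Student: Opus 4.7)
The plan is to recognise the stated identity as the inverse of a discrete Fourier transform, which I would read off directly from the spectral decomposition of $L$.

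First I would use the expansion $V(t)=e^{-itL}=\sum_{k\in\Z}e^{-itk}P_k$. Specialising to $t=2\pi j/m$ and grouping integers modulo $m$ (write $k=mn+l$ with $0\le l\le m-1$, so that $e^{-i2\pi j(mn+l)/m}=e^{-i2\pi jl/m}$) gives
$$V(2\pi j/m)=\sum_{l=0}^{m-1}e^{-i2\pi jl/m}\Big(\sum_{n\in\Z}P_{mn+l}\Big)=\sum_{l=0}^{m-1}\omega^{jl}\mathbf{P}_l,$$
where $\omega=e^{-i2\pi/m}$ is a primitive $m$-th root of unity. The rearrangement is legitimate in the strong operator topology because the $P_k$ are pairwise orthogonal projections whose ranges span $H$; indeed, applied to any eigenvector of $L$, only one summand survives.

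The second step is to invert the system. The identity above exhibits $(V(2\pi j/m))_{j=0}^{m-1}$ as the image of $(\mathbf{P}_l)_{l=0}^{m-1}$ under the $m\times m$ matrix $F=(\omega^{jl})_{0\le j,l\le m-1}$, which is the unnormalised DFT matrix. A standard geometric-series computation yields $F^{-1}=\tfrac{1}{m}(\omega^{-ij})_{0\le i,j\le m-1}$, so
$$\mathbf{P}_i=\frac{1}{m}\sum_{j=0}^{m-1}\omega^{-ij}\,V(2\pi j/m)=\sum_{j=0}^{m-1}a_{ij}V(2\pi j/m),\qquad a_{ij}=\tfrac{1}{m}\omega^{-ij},$$
and the matrix $(a_{ij})$ is invertible, its inverse being $F$.

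The only conceivably delicate point is the infinite sum in the spectral expansion, but it is harmless: testing the asserted equality on an arbitrary eigenvector of $L$ reduces the whole claim to the finite Vandermonde inversion just carried out, which is purely algebraic. Hence the proof will be essentially a one-line algebraic inversion once the spectral decomposition is written down, and I anticipate no real obstacle.
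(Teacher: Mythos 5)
Your proof is correct, and it is exactly the ``elementary linear algebra'' the paper alludes to without spelling out: expand $V(2\pi j/m)=\sum_{l=0}^{m-1}\omega^{jl}\mathbf{P}_l$ via the spectral decomposition $(2.5)$ after reducing indices mod $m$, then invert the DFT/Vandermonde matrix $F=(\omega^{jl})$ to get $a_{ij}=\frac{1}{m}\omega^{-ij}$. Your remark that the infinite sum is harmless because it can be tested on eigenvectors (where only one $P_k$ survives) appropriately disposes of the only analytic subtlety, and the explicit $a_{ij}$ you obtain are consistent with the coefficients $g(n,m;j)$ of $(1.5)$ once combined with the expansion of $U(2\pi n/m)$, so this also quietly verifies Proposition 2.6.
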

\end{remark}
Later we use the wave front set of $G(t,x)$
\begin{prop}
If $t/2\pi=\alpha\in R\backslash\Q$, we have
\begin{equation}
{\rm WF} (G(2\pi\alpha,x))=T^*S^1\backslash\{0\}.
\end{equation}
\end{prop}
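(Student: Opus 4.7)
The plan is to upgrade the singular-support argument already carried out for Example 1.1 to a wave front set statement, by exploiting the two symmetries recorded in Lemma 1.2 together with the standard properties of the wave front set (closedness, conicity, invariance under multiplication by a smooth non-vanishing function, and the simple transformation law under translation of the base point). Write $G_t(x)\eqdef G(t,x)$; the goal is to show that for $t=2\pi\alpha$ with $\alpha\in\R\backslash\Q$ the set ${\rm WF}(G_t)$ contains some $(x_0,\xi_0)$ and then spread it over all of $T^*S^1\backslash\{0\}$.

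First I would note that the proof of Example 1.1 already gives ${\rm sing\ supp}(G_t)=S^1$ when $t/2\pi$ is irrational, so in particular ${\rm WF}(G_t)$ is non-empty; pick any $(x_0,\xi_0)\in{\rm WF}(G_t)$. Then I would use (1.7): since $e^{i(x+t)}$ is smooth and nowhere vanishing, multiplication by it preserves the wave front set, so
\begin{equation*}
{\rm WF}\bigl(G_t(\cdot+2t)\bigr)={\rm WF}\bigl(e^{i(\cdot+t)}G_t\bigr)={\rm WF}(G_t).
\end{equation*}
On the other hand, translation in the base variable $x$ by $-2t$ moves $(x,\xi)$ to $(x-2t,\xi)$ in the cotangent bundle, leaving the fibre direction unchanged. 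Combining these two facts, $(x_0,\xi_0)\in{\rm WF}(G_t)$ implies $(x_0+2kt,\xi_0)\in{\rm WF}(G_t)$ for every $k\in\Z$.

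For $t=2\pi\alpha$ with $\alpha$ irrational, the orbit $\{x_0+2kt\bmod 2\pi:k\in\Z\}=\{x_0+4\pi k\alpha\bmod 2\pi\}$ is dense in $S^1$. Since ${\rm WF}(G_t)$ is a closed subset of $T^*S^1\backslash\{0\}$, we conclude that $(x,\xi_0)\in{\rm WF}(G_t)$ for every $x\in S^1$. Conicity of the wave front set then promotes this to $\{(x,\lambda\xi_0):x\in S^1,\ \lambda>0\}\subset{\rm WF}(G_t)$, i.e.\ one of the two half-cones is entirely captured. Finally, the symmetry (1.8), $G_t(x)=G_t(-x)$, implies that ${\rm WF}(G_t)$ is invariant under $(x,\xi)\mapsto(-x,-\xi)$, which supplies the opposite half-cone. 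Together the two half-cones exhaust $T^*S^1\backslash\{0\}$, giving the claim.

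There is no real obstacle: the whole argument is a bootstrap of the singular-support argument, and the only points that need care are the standard observations that multiplication by a smooth non-vanishing phase and translation of the base point act on the wave front set in the expected way, together with the equidistribution $\{4\pi k\alpha\bmod 2\pi\}$ dense for $\alpha\notin\Q$.
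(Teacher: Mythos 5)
Your argument is correct and is essentially the same as the paper's: both start from the non-empty wave front set furnished by Example 1.1(ii), use the translation symmetry (1.7) together with density of the irrational-rotation orbit and closedness of the wave front set to cover all base points, and use the reflection symmetry (1.8) to capture both fibre directions. The paper applies (1.8) first (to get $\pm\xi_0$ at $x=0$) and then spreads over the base via (1.7), whereas you spread one half-cone over the base first and then reflect; this is a reordering of the same ingredients, not a different proof.
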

for $G(t,x)$ defined by (1.4).
\begin{proof}
By example 1(ii), the singularity exists at $x=0$. So we have $(0, \xi_0) \in{\rm WF} (G(2\pi\alpha,y))$ for some $\xi_0\not=0$. By (1.8), We also get $(0, -\xi_0) \in{\rm WF} (G(2\pi\alpha,y))$. 
We apply similar argument used to prove Example 1 (ii). 
\end{proof}
\section{Proof of Theorem 1.3 (i).}
In this section, we consider $t/2\pi\in\Q$ case. We prove the following theorem.
\begin{thm}
Let M be a Zoll manifold with period $2\pi$, $P_s\in\Psi_{phg}^1(M; \Omega^\frac{1}{2}, \Omega^\frac{1}{2})$ be a self-adjoint operator and $P=\Delta_M+P_s$.
 Then there exit self-adjoint operators $Q_1$, $Q_2\in\Psi_{phg}^1(M; \Omega^\frac{1}{2}, \Omega^\frac{1}{2}) $ which is commute with $\Delta_M$ and $S\in\Psi^0(M; \Omega^\frac{1}{2}, \Omega^\frac{1}{2})$ such that 
at the time $t= 2\pi n/m \in 2\pi\Q$,  $n,m\in\Z$, 
the following identity holds
\begin{equation}
U(2\pi n/m)=e^{-iS}W(2\pi n/m)\sum_{j=0}^{m-1}g(n,m;j)V(2\pi j/m){e^{iS}}+Smoothing.
\end{equation}
where $U(t)=e^{itP}$, $V(t)=e^{-itQ_1}$ and $W(t)=e^{itQ_2}$. 
The principal symbols of these operator are given by
 $\sigma_{pri} (Q_1)$=$\sigma_{pri} (\sqrt{-\Delta_M})$, $\sigma_{pri} (Q_2)=\sigma_{pri}(\tilde {P_s}+\alpha\sqrt{-\Delta_M} /2) $. 
Here
\begin{equation}
\sigma_{pri}(\tilde {P_s})=\frac{1}{2\pi}\int^{2\pi}_{0}\sigma_{pri}(P_s)({\rm exp}(tH_l))dt
\end{equation}
where ${\rm exp}(tH_l)$ is the Hamilton flow for the symbol $l=\sigma_{pri} (\sqrt{-\Delta_M})$. 
$S$ satisfies 
$$e^{iS}(\Delta_M+P_s{)e^{-iS}}=\Delta_M+\tilde{P_s}+R$$
for a self-adjoint operators $\tilde{P_s}\in\Psi_{phg}^1(M; \Omega^\frac{1}{2}, \Omega^\frac{1}{2})$ which is commute with $\Delta_M$ and $R\in \Psi^{-\8}(M; \Omega^\frac{1}{2}, \Omega^\frac{1}{2})$. $\alpha $ denotes the Maslov index.
The coefficient $g(n,m;j)\in\C$ is defined by $(1.5)$.
\end{thm}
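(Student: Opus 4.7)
The plan is to conjugate $P=\Delta_M+P_s$ into a form commuting with $\Delta_M$, rewrite $\Delta_M$ in terms of the integer-spectrum operator $L$ from Proposition 2.2, and then apply Proposition 2.5 to collapse $e^{-itL^2}$ at rational times. To begin, I would construct $S\in\Psi^{0}(M;\Omega^{1/2},\Omega^{1/2})$ by the standard Weinstein--Guillemin averaging procedure on Zoll manifolds: the leading correction is chosen so that the Poisson bracket of $\sigma_{pri}(S)$ with $l=\sigma_{pri}(\sqrt{-\Delta_M})$ absorbs $\sigma_{pri}(P_s)$ minus its geodesic-flow average (yielding exactly (3.2)), and iterating in decreasing symbol order produces $S$ satisfying
\[
e^{iS}(\Delta_M+P_s)e^{-iS}=\Delta_M+\tilde{P_s}+R,\qquad [\tilde{P_s},\Delta_M]=0,\ R\in\Psi^{-\infty}.
\]
A Duhamel expansion around $e^{it(\Delta_M+\tilde{P_s})}$ shows $e^{it(\Delta_M+\tilde{P_s}+R)}-e^{it(\Delta_M+\tilde{P_s})}$ is smoothing, hence $e^{itP}=e^{-iS}e^{it(\Delta_M+\tilde{P_s})}e^{iS}$ modulo smoothing.

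Next, take $L=\sqrt{-\Delta_M}+\alpha/4+E$ from Proposition 2.2 and choose $E$ within the functional calculus of $\sqrt{-\Delta_M}$ (so that $E$ acts by the correct scalar shift on each $\sqrt{-\Delta_M}$-cluster); then $E$ commutes with every operator that commutes with $\Delta_M$, and in particular $L$, $E$, $\tilde{P_s}$, $\Delta_M$ commute pairwise. Expanding
\[
-\Delta_M=(L-\alpha/4-E)^2=L^2-(\alpha/2+2E)L+(\alpha/4+E)^2
\]
and using commutativity to separate the exponential,
\[
e^{it(\Delta_M+\tilde{P_s})}=e^{it\tilde{P_s}}\,e^{it(\alpha/2+2E)L}\,e^{-it(\alpha/4+E)^2}\,e^{-itL^2}=W(t)\,e^{-itL^2},
\]
where I set $Q_1:=L$, $Q_2:=\tilde{P_s}+(\alpha/2+2E)L-(\alpha/4+E)^2$, and $W(t):=e^{itQ_2}$. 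Since $2EL$ and $(\alpha/4+E)^2$ are of order zero, the identities $\sigma_{pri}(Q_1)=\sigma_{pri}(\sqrt{-\Delta_M})$ and $\sigma_{pri}(Q_2)=\sigma_{pri}(\tilde{P_s}+\alpha\sqrt{-\Delta_M}/2)$ are immediate. At $t=2\pi n/m$, Proposition 2.5 applied to the integer-spectrum $L$ gives
\[
e^{-i(2\pi n/m)L^2}=\sum_{j=0}^{m-1}g(n,m;j)\,V(2\pi j/m),\qquad V(s):=e^{-isL},
\]
and substituting this back and conjugating by $e^{\pm iS}$ yields (3.1).

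The genuinely delicate step is the normal-form construction of $S$: obtaining the iterative reduction of $P_s$ to a commuting operator $\tilde{P_s}$ modulo smoothing, and verifying that $R$ stays smoothing both after conjugation by the bounded operator $e^{iS}$ and after time-propagation (the latter by a Volterra/Duhamel expansion on $[0,t]$). A secondary subtlety is arranging $[E,\tilde{P_s}]=0$: commuting with $\Delta_M$ alone is not sufficient on multi-dimensional eigenspaces, so one must insist that $E$ lie in the spectral algebra of $\sqrt{-\Delta_M}$ itself. Once these ingredients are in place, the rest of the proof is a purely algebraic rearrangement of commuting exponentials combined with a direct application of Proposition 2.5.
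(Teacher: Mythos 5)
Your proposal is correct and follows essentially the same route as the paper's proof: Weinstein-type averaging to produce the conjugation $e^{iS}$ that reduces to $\tilde{P_s}$ commuting with $\Delta_M$, a Duhamel argument to discard the $\Psi^{-\infty}$ remainder, expansion of $\Delta_M$ in terms of the integer-spectrum operator $L=\sqrt{-\Delta_M}+\alpha/4+E$, and then Proposition 2.5 applied to $e^{-itL^2}$. One small but worthwhile addition in your writeup: you explicitly flag that $E$ (hence $\tilde E$) must be taken in the functional calculus of $\sqrt{-\Delta_M}$ so that it commutes with $\tilde{P_s}$ on multi-dimensional $\Delta_M$-eigenspaces; the paper uses this implicitly in the factorization $e^{it(\Delta_M+\tilde P_s)}=W(t)e^{-itL^2}$ but does not spell it out.
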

\begin{remark}
 $e^{iS}$ does not change the wave front set since it is invertible and it is a $\Psi DO$ whose principal symbol is defined by $e^{i\sigma_{pri}(S)}\in S^0(M)$. 
For $V(t)$ and $W(t)$, we can apply the propagation of singularity. 
\end{remark}
Thus we have the propagation of the wave front set.
\begin{cor}
Let M and $P=\Delta_M+P_s$ as above. 
Then at the time $t=2\pi n/m$, $n,m\in\Z$, $m>0$, $n,m$: co-prime, we have
\begin{equation}
{\rm{WF}}(U(2\pi n/m)u)\subset {\rm exp}(2\pi n/m H_q)\sum_{j; g(n,m;j)\not=0} {\rm exp}(2\pi j/m H_l ){\rm{WF}}(u)  \text{,  $\forall u \in {\cal D'}(M)$}.
\end{equation}
Here ${\rm exp}(t H_l )$ and ${\rm exp}(t H_q)$ are the Hamilton flows for the symbols $l=\sigma (\sqrt{-\Delta_M})$, and $q=\sigma (\alpha \sqrt{-\Delta_M}/2+\tilde {P_s})$ respectively.
\end{cor}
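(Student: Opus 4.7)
The plan is to feed the decomposition (3.1) of Theorem 3.1 into the definition of the wave front set and track the singularities through each factor, using Proposition 2.1 for the evolution factors and standard microlocal facts for the elliptic conjugators and the smoothing remainder.

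First, by Theorem 3.1, $U(2\pi n/m)u$ equals $e^{-iS} W(2\pi n/m) \sum_{j=0}^{m-1} g(n,m;j) V(2\pi j/m) e^{iS} u$ modulo a smoothing image; since smoothing operators send $\mathcal{D}'(M)$ into $C^\infty(M)$, the remainder contributes nothing to the wave front set and may be discarded at once. Next, as Remark 3.2 records, $e^{\pm iS} \in \Psi_{phg}^0(M; \Omega^{1/2}, \Omega^{1/2})$ is invertible with nowhere-vanishing principal symbol $e^{\pm i\sigma_{pri}(S)}$ and is therefore microlocally elliptic everywhere, so $\mathrm{WF}(e^{\pm iS}v) = \mathrm{WF}(v)$ for every distribution $v$. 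This lets me peel the outer $e^{-iS}$ and the inner $e^{iS}$ off the composition without altering the wave front set.

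For the remaining portion, I would invoke the elementary fact that the wave front set of a finite sum of distributions is contained in the union of the wave front sets of the summands, restricting attention to those $j$ with $g(n,m;j) \neq 0$. Each surviving summand has the form $V(2\pi j/m) w$ with $\sigma_{pri}(Q_1) = l$, so Proposition 2.1 contributes $\exp((2\pi j/m) H_l)\mathrm{WF}(w)$; a last application of the same proposition to $W(2\pi n/m)$, whose principal symbol is $q = \sigma(\alpha\sqrt{-\Delta_M}/2 + \tilde{P}_s)$, advances the resulting union by $\exp((2\pi n/m) H_q)$, producing exactly the inclusion stated in the corollary.

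I do not foresee a substantial obstacle, since the corollary is essentially the microlocal translation of the composition formula (3.1) and every ingredient has been prepared upstream. The only bookkeeping items are the consistent sign conventions for the Hamilton flows (absorbing the $-i$ in $V(t) = e^{-itQ_1}$ and exploiting the $2\pi$-periodicity of $\exp(tH_l)$ when required), the understanding that the symbol $\sum$ in the conclusion denotes a union of shifted wave front sets rather than a pointwise sum, and the verification that the outer elliptic conjugators $e^{\pm iS}$ preserve wave front sets so that the microlocal content survives their cancellation.
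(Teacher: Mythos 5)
Your proposal is correct and follows essentially the same route as the paper: the paper itself treats Corollary 3.3 as an immediate consequence of the decomposition (3.1) in Theorem 3.1, Remark 3.2 (ellipticity and invertibility of $e^{\pm iS}$), and the propagation-of-singularity Proposition 2.1 applied to $V$ and $W$, with the smoothing remainder dropped. You simply spell out the steps the paper leaves implicit (and correctly flag the two bookkeeping points, namely that the $\sum$ in the conclusion is a union and that the sign in $V(t)=e^{-itQ_1}$ is absorbed by the $2\pi$-periodicity of $\exp(tH_l)$ together with the symmetry of $\{j: g(n,m;j)\neq 0\}$ under $j\mapsto m-j \bmod m$, visible from (1.9)).
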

The term $j; g(n,m;j)\not=0$ can be computed by (1.9). Similarly we get
\begin{cor}
Let M and $P=\Delta_M+P_s$ as above. 
Then for the time $t=2\pi n/m$, $n,m\in\Z$, $m>0$, $n,m$: co-prime, the following identity holds
\begin{equation}
{\rm{WF}}(U(2\pi n/m)\delta_{x_0})={\rm exp}(2\pi n/m H_q)\sum_{j; g(n,m;j)\not=0}{\rm exp}(2\pi j/m H_l ) {\rm{WF}}(\delta_{x_0}).
\end{equation}
Here $\delta_{x_0}$ is the Dirac measure at $x_0\in M$ and ${\rm exp}(t H_l )$ and ${\rm exp}(t H_q)$ are as above.
\end{cor}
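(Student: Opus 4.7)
The plan is to apply the factorization (3.1) from Theorem 3.1 to $u = \delta_{x_0}$ and track the wave front set through each factor, using Proposition 2.1 together with Remark 3.2. The inclusion $\subseteq$ in (3.4) is already the content of Corollary 3.3 specialised to $u=\delta_{x_0}$; the real task is the reverse inclusion.

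First, since $e^{iS}$ is invertible and elliptic of order zero (Remark 3.2),
$${\rm WF}(e^{iS}\delta_{x_0})={\rm WF}(\delta_{x_0})=T^*_{x_0}M\setminus\{0\},$$
the full cotangent fibre above $x_0$. Proposition 2.1 applied to $V(2\pi j/m)$, whose generator $Q_1$ has principal symbol $l=\sigma_{pri}(\sqrt{-\Delta_M})$, then yields
$${\rm WF}\bigl(V(2\pi j/m)\,e^{iS}\delta_{x_0}\bigr)={\rm exp}(2\pi j/m\,H_l)(T^*_{x_0}M\setminus\{0\}).$$

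The crucial step is to verify that no cancellation occurs in the finite sum $\sum_{j=0}^{m-1}g(n,m;j)V(2\pi j/m)e^{iS}\delta_{x_0}$, so that its wave front set is exactly the union over $\{j:g(n,m;j)\neq 0\}$ of the individual contributions. For this I would show that for $0\le j,j'<m$ with $j\ne j'$, the conic sets ${\rm exp}(2\pi j/m\,H_l)(T^*_{x_0}M\setminus\{0\})$ are pairwise disjoint in $T^*M\setminus\{0\}$. An intersection would produce a covector at $x_0$ whose image under the geodesic flow at the strictly sub-periodic time $2\pi(j-j')/m\in(0,2\pi)$ lies again in the fibre $T^*_{x_0}M$, which is incompatible with the Zoll hypothesis that every orbit of the geodesic flow on $T^*M\setminus\{0\}$ is a simple closed curve of minimal period $2\pi$. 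Given this disjointness, distinct terms contribute to disjoint portions of the wave front set, and cancellation is impossible.

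Finally, Proposition 2.1 applied to $W(2\pi n/m)$ transports this union by ${\rm exp}(2\pi n/m\,H_q)$, the outer factor $e^{-iS}$ preserves the wave front set, and the smoothing remainder in (3.1) contributes nothing. Assembling these steps yields (3.4). The main obstacle is the disjointness/no-cancellation step, which is where the Zoll property is used essentially; without it even the equality could fail because of interference between contributions propagated along distinct geodesic orbits through $x_0$.
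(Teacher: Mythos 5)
The paper itself gives no argument for Corollary~3.4 (it is stated after ``Similarly we get'' with no further justification), so the comparison here is really against what a correct proof would have to contain. Your reduction of the problem is sound: the inclusion $\subseteq$ comes from Theorem~3.1, Remark~3.2, and Proposition~2.1, and the only real content is to rule out cancellation in the finite sum $\sum_j g(n,m;j)\,V(2\pi j/m)\,e^{iS}\delta_{x_0}$. The point where your argument breaks down is the disjointness step.

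You claim that an overlap between $\exp(2\pi j/m\,H_l)(T^*_{x_0}M\setminus\{0\})$ and $\exp(2\pi j'/m\,H_l)(T^*_{x_0}M\setminus\{0\})$ for $j\neq j'$ would contradict the Zoll hypothesis. This is a non sequitur. An overlap produces a covector $\xi\in T^*_{x_0}M\setminus\{0\}$ and a time $s=2\pi(j-j')/m\in(0,2\pi)$ with $\exp(sH_l)(x_0,\xi)=(x_0,\xi')$ for some $\xi'\in T^*_{x_0}M\setminus\{0\}$. The Zoll hypothesis forbids $\xi'=\xi$, i.e.\ it gives minimal period $2\pi$ for the \emph{orbit in phase space}, but it places no restriction on the orbit returning to the \emph{fibre} over $x_0$ at a sub-periodic time with a different covector $\xi'\neq\xi$. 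Such a $(x_0,\xi,s,\xi')$ is simply a geodesic loop based at $x_0$ of length $s<2\pi$, and the Zoll condition does not exclude geodesic loops; it only says the full orbit closes up exactly at time $2\pi$. So the disjointness of the sets $\exp(2\pi j/m\,H_l)(T^*_{x_0}M\setminus\{0\})$ is an additional geometric assumption (equivalent to the absence of geodesic loops at $x_0$ of rational length in $(0,2\pi)$), not a consequence of the Zoll hypothesis, and your proof as written does not close the gap.

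To repair the argument one has to replace the disjointness claim by a genuine no-cancellation argument at points of $T^*M\setminus\{0\}$ where several Lagrangians $\exp(2\pi j/m\,H_l)(T^*_{x_0}M\setminus\{0\})$ meet. The natural route, and the one that the rest of the paper suggests, is the FIO computation of Section~4: via identity~(2.12) one writes $e^{-itL^2}\delta_{x_0}$ as an elliptic $\Psi$DO applied, in the normal polar radial variable, to $\langle D_y\rangle^{-N}G(t,y)$ (Proposition~4.6). For $t=2\pi n/m$, formula~(1.6) expresses $G(t,\cdot)$ as a finite combination of shifted Dirac masses, and the ellipticity of the operator $B(\theta)$ then forces a singularity at each $r=2\pi j/m$ with $g(n,m;j)\neq 0$, without any a priori disjointness of the flowed-out Lagrangians. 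Your high-level outline is correct; the specific justification of the no-cancellation step is where it fails.
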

This is a generalization of Theorem 1.3 (i). We prove Theorem 3.1 by reducing $e^{itP}$ to the form to which we can apply Proposition 2.5. This is essentially well known but for readers we write the detail. First we will assume $P_s=0$. By Proposition 2.2, we take a self adjoint operator  $E \in \Psi^{-1}_{phg}(M; \Omega^\frac{1}{2}, \Omega^\frac{1}{2})$ such that 
\begin{equation}
{\rm Spec}(\sqrt{-\Delta_M}+\alpha/4 +E)\subset \Z
\end{equation}  
We write 
\begin{equation}
L=\sqrt{-\Delta_M}+\alpha/4 +E.
\end{equation}
Then we have
$$L^2= -\Delta_M + \alpha\sqrt{-\Delta_M}/2+\tilde E$$
Here $\tilde E\in\Psi^{0}_{phg}(M; \Omega^\frac{1}{2}, \Omega^\frac{1}{2})$ commutes with $\Delta_M$. Since $\tilde E$ commute with $\Delta_M$, we have
\begin{equation}
e^{it\Delta_M }=e^{it (\alpha\sqrt{-\Delta_M}/2+\tilde E)}e^{-i tL^2}\\
\end{equation}
Next we shall consider the case $P_s\not =0$. First we assume that $P_s$ commutes with $\Delta_M$. Then by (3.7), we have
\begin{equation}
e^{it\Delta_M+P_s }=e^{it (\alpha\sqrt{-\Delta_M}/2+P_s+\tilde E)}e^{-i tL^2}\\
\end{equation}
Finally we consider the case that $P_s$ does not commute with $\Delta_M$. 

We claim that there exit self-adjoint $S \in \Psi^{0}(M; \Omega^\frac{1}{2}, \Omega^\frac{1}{2})$ and $R\in \Psi^{-\8}(M; \Omega^\frac{1}{2}, \Omega^\frac{1}{2})$ such that 
\begin{equation}
e^{iS}(\Delta_M+P_s)e^{-iS} + R =\Delta_M+\tilde P_s.
\end{equation}
Here $\tilde P_s \in \Psi^{1}_{phg}(M; \Omega^\frac{1}{2}, \Omega^\frac{1}{2})$ is a self-adjoint operator which commutes with $\Delta_M$. The principal symbol of $\tilde P_s$ is given by $(3.2)$. 

This is essentially due to \cite{We}, see also \cite{Ho}.
Taking $Q=P_s- \alpha\sqrt{-\Delta_M}/2-\tilde E$, we will prove $L^2+Q$ is unitary equivalent to $L^2+B$ modulo smoothing operator where $B \in \Psi^{1}$, commutes with $\Delta_M$, is given below.   
We introduce the operator
$$Q_t=e^{it{L}}Qe^{-it{L}}$$
which is a self-adjoint $\Psi DO$. Taking an average,
$$B_1=\frac{1}{2\pi}\int_0^{2\pi}Q_tdt,$$
it follows that
$$[L,B_1]=\frac{1}{2\pi i}( e^{i2\pi {L}}Qe^{-i2\pi {L}} -Q).$$
since 
$$\frac{d}{dt}Q_t=ie^{it{L}}[L,Q]e^{-it{L}}.$$
By (3.5), we have $[L,B_1]=0$. Applying the Egolov's theorem the sub-principal symbol of $L^2+B_1$ is given by (3.2). 
We set $B= B_1.$
By similar calculus, we have
\begin{equation}
B-Q=\frac{1}{2\pi}\int_0^{2\pi}dt\int_0^t\frac{d}{ds}Q_sds=[iT, L]
\end{equation}
where
$$T=\frac{1}{2\pi}\int_0^{2\pi}dt\int_0^tQ_sds.$$
Since $L$ is elliptic, we can take its parametrix $N$.
Then $S_1=1/2TN$ is a zero order self-adjoint $\Psi DO$. By the calculus of $\Psi DO$, we have
$$[S_1,L^2]=\frac{1}{2}([T,L^2]N+T[N,L^2])=[T,L]+R_0$$
where $R_0\in \Psi^{-1}$ is a self-adjoint operator. 
We set $S=S_1$ and consider $ie^{i{S}}(L^2+Q)e^{-i{S}}$.

If $A\in\Psi^m$, then $ie^{i{S}}Ae^{-i{S}}$ is a $\Psi DO$ whose symbol is the asymptotic sum of the following formal series
$$\sum_0^\8(({\rm ad}\  iS)^jA/ j!$$
where $({\rm ad} \ iS)A=[iS,A]$.
And by (3.9), We have 
$$ie^{i{S}}(L^2+Q)e^{-i{S}}=L^2+B+R_1 $$
where $R_1\in \Psi^{0}$ is a self-adjoint operator. Applying similar argument to $R_1$, we have $B_2\in \Psi^{0}$ and $S_2\in \Psi^{-1}$ satisfying 
 $$ie^{i{S}}(L^2+Q)e^{-i{S}}=P^2+B+R_2 $$
 where $B=B_1+B_2$, $S=S_1+S_2$ and $R_2\in \Psi^{-1}$ are self-adjoint operators. Continuing this argument and taking asymptotic sums, we have desired $B$ and $S$. This means (3.9). Thus we have the identity
\begin{equation}
i\pl_t+\Delta_M+\tilde P_s=e^{iS}(i\pl_t+\Delta_M+P_s)e^{-iS}  +R.
\end{equation}
We shall give the solution operator to this equation. We write the solution operator as follows $$e^{iS}(e^{it( \Delta_M+ P_s)}+ A(t) )e^{-iS}$$ and inserting this to the right hand side of (3.11), we have the following equation to the operator $A(t)$,
\begin{equation}
\begin{cases}
(i\pl_t+\Delta_M+P_s+e^{-iS}Re^{iS}  )A(t) =- e^{-iS}Re^{iS} e^{it( \Delta_M+ P_s)}\\
A(0)=0.
\end{cases}
\end{equation}
This can be solved by Duhamel's principle,  we have
\begin{equation}
A(t)=i\int_0^te^{i(t-s)(\Delta_M+P_s+e^{-iS}Re^{iS} )}e^{-iS}Re^{iS} e^{is( \Delta_M+ P_s)}ds.
\end{equation}
Since $e^{-iS}Re^{iS}$ is a smoothing operator, $A(t)$ is a smoothing operator. We have the identity 
\begin{equation}
e^{it( \Delta_M+ P_s)}=e^{-iS}e^{it( \Delta_M+\tilde P_s)}e^{iS}+ A(t).
\end{equation} 
Since $\tilde P_s$ is commute with $\Delta_M$, by the identity and (3.8), we have proved the following
\begin{prop}
Let M be a Zoll manifold with period $2\pi$ and $P=\Delta_M+P_s$ as above. Then there exit self-adjoint operators $S, \ \tilde E\in\Psi^0(M; \Omega^\frac{1}{2}, \Omega^\frac{1}{2})$ such that 
\begin{equation}
e^{it( \Delta_M+ P_s)}=e^{-iS}e^{it (\alpha\sqrt{-\Delta_M}/2+\tilde P_s+\tilde E)}e^{-i tL^2}e^{iS}+ {\rm Smoothing}
\end{equation}
Here $L$ is defined by (3.6)
\end{prop}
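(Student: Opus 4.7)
The plan is to assemble the identity (3.15) from three ingredients already in play: (a) the reduction of $\sqrt{-\Delta_M}$ to an operator $L$ with integer spectrum, (b) a Weinstein-type averaging that conjugates $P_s$ to a commuting operator modulo a smoothing remainder, and (c) a Duhamel argument promoting this operator-level statement to an evolution-level statement.

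First I would use Proposition 2.2 to set $L=\sqrt{-\Delta_M}+\alpha/4+E$ with ${\rm Spec}(L)\subset\Z$. Squaring and using that $E\in\Psi^{-1}_{phg}$ commutes with $\Delta_M$ yields
\begin{equation*}
L^2=-\Delta_M+\alpha\sqrt{-\Delta_M}/2+\tilde E,\qquad \tilde E\in\Psi^0_{phg},\ [\tilde E,\Delta_M]=0.
\end{equation*}
Because $\tilde E$ and $\sqrt{-\Delta_M}$ both commute with $\Delta_M$, the exponentials factorize exactly, giving the case $P_s=0$ as in (3.7), and more generally (3.8) whenever $P_s$ commutes with $\Delta_M$. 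This handles the structure on the right-hand side of (3.15); the task is then to remove the assumption that $P_s$ commutes with $\Delta_M$.

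To that end, I would build $S$ iteratively. Set $Q=P_s-\alpha\sqrt{-\Delta_M}/2-\tilde E$ and put $Q_t=e^{itL}Qe^{-itL}$. Averaging along the $2\pi$-periodic flow, $B_1=\tfrac{1}{2\pi}\int_0^{2\pi}Q_t\,dt$ commutes with $L$ (hence with $\Delta_M$) because $e^{2\pi iL}=(-1)^{?}\mathrm{Id}$ up to a scalar from the integer-spectrum property, and a direct differentiation shows $B_1-Q=[iT,L]$ for an explicit $T\in\Psi^1$. Using an elliptic parametrix $N$ of $L$, the operator $S_1=\tfrac12 TN\in\Psi^0$ is self-adjoint and $[S_1,L^2]=[T,L]+R_0$ with $R_0\in\Psi^{-1}$. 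Conjugating $L^2+Q$ by $e^{iS_1}$ via the standard expansion $e^{iS}Ae^{-iS}\sim\sum_j(\ad iS)^jA/j!$ reduces the non-commuting error by one order; iterating and taking an asymptotic sum $S\sim\sum_j S_j$ in the $\Psi DO$ calculus produces the identity
\begin{equation*}
e^{iS}(\Delta_M+P_s)e^{-iS}=\Delta_M+\tilde P_s+R,\qquad R\in\Psi^{-\infty},
\end{equation*}
with $\tilde P_s$ self-adjoint, commuting with $\Delta_M$, and with principal symbol given by the geodesic average (3.2) via Egorov.

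Finally, to upgrade this to an equality of propagators modulo smoothing, I would apply Duhamel's principle to the conjugated equation $i\partial_t+\Delta_M+\tilde P_s=e^{iS}(i\partial_t+\Delta_M+P_s)e^{-iS}+R$: writing the solution of the perturbed problem as $e^{-iS}(e^{it(\Delta_M+P_s)}+A(t))e^{iS}$ and solving the resulting inhomogeneous equation for $A(t)$ gives
\begin{equation*}
A(t)=i\int_0^t e^{i(t-s)(\Delta_M+P_s+e^{-iS}Re^{iS})}e^{-iS}Re^{iS}e^{is(\Delta_M+P_s)}\,ds,
\end{equation*}
which is smoothing because unitary conjugates of $R\in\Psi^{-\infty}$ remain smoothing and the integrand takes values in $\Psi^{-\infty}$. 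Combining with (3.8) applied to $\Delta_M+\tilde P_s$ and recognizing $e^{-itL^2}$ as the factor produced in the first step yields (3.15). The main obstacle is the Weinstein step: verifying that the formal asymptotic series for $S$ actually defines a self-adjoint element of $\Psi^0$ whose conjugation error is genuinely in $\Psi^{-\infty}$ (not merely in $\Psi^{-N}$ for each $N$ separately) requires a careful Borel-type asymptotic summation together with a self-adjoint symmetrization at each step so that the iterative $S_j$'s can be assembled into a single self-adjoint $S$.
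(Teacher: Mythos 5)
Your proposal follows essentially the same route as the paper: the same reduction via Proposition 2.2 to $L=\sqrt{-\Delta_M}+\alpha/4+E$ with integer spectrum, the same Weinstein averaging producing $Q_t$, $B_1$, $T$, $S_1=\tfrac12 TN$, the same iterative conjugation by $e^{iS}$ with asymptotic summation, and the identical Duhamel formula for $A(t)$. One small imprecision worth noting: since $\mathrm{Spec}(L)\subset\Z$, one has $e^{2\pi iL}=\mathrm{Id}$ exactly (not merely $\pm\mathrm{Id}$ up to a scalar), which is precisely what forces $[L,B_1]=0$; apart from that your argument matches the paper's, and the concern you raise about Borel summation and self-adjointness of the assembled $S$ is a genuine subtlety the paper leaves implicit.
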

We apply Proposition 2.5 to (3.15). Taking 
$Q_1=L$ and $Q_2=\alpha\sqrt{-\Delta_M}/2+\tilde P_s+\tilde E$, we have Theorem 3.1.
\begin{remark}
If the manifolds are odd dimension rank 1 symmetric spaces, we can easily prove support theorem by our identity
although Kakehi proved support theorem for more general compact symmetric spaces.
\end{remark}
We outline the argument. Essentially we can assume the manifolds are odd dimensional spheres. The spectrum of the Laplace-Beltrami operator is the following, see e.g. \cite{Ta}, 
\begin{equation}
{\rm spec}\Delta_{S^d}=\{-k(k+(d-1)/2);k\in\N\}.
\end{equation}
We have
$$\Delta_{S^d}= \Delta_{S^d}-(d-1)^2/4+(d-1)^2/4=-\sqrt{-\Delta_{S^d}+(d-1)^2/4}^2+(d-1)^2/4$$
By (3.16), we know $${\rm spec}\sqrt{-\Delta_{S^d}+(d-1)^2/4}\subset \Z$$
So we can apply the Remark 2.5 to this operator. Using the Huygens' principle, see \cite{La-Ph}, \cite{Ta}, 
we have the following
\begin{equation}
{\rm supp}(e^{2\pi i n/m\Delta_{S^d}}\delta_{x_0})=\sum_{j; G(n,m;j)\not=0} d(2\pi j/m){\rm supp}(\delta_{x_0})  \text{,  $\forall u \in {\cal D'}(M)$}.
\end{equation}
Here $\delta_{x_0}$ is the Dirac measure at $x_0\in S^d$, $d(t)$ is defined by 
$$d(t)\{x\}=\{y\in S^d; {\rm dist}_{S^n}(y,x)=t\},\ t\in\R, x\in S^n,  $$
where ${\rm dist}_{S^d}$ is the distance of the Sphere.
\section{Proof of Theorem 1.3 (ii).}
We shall consider $t/2\pi\in \R\backslash \Q$ case. 
We prove the following a generalization of Theorem 1.3 (ii). 
\begin{thm}
Let M be a Zoll manifolds with period $2\pi$, $P_s\in\Psi_{phg}^1(M; \Omega^\frac{1}{2}, \Omega^\frac{1}{2})$ be a self-adjoint operator and $P=\Delta_M+P_s$. 
If $t/2\pi =\alpha\in \R\backslash \Q$, the following identity holds
\begin{equation}
{\rm{WF}}(U(2\pi\alpha)\delta_{x_0})= {\rm exp}(2\pi \alpha H_q)\cup_{0\leq \tau \leq 2\pi} {\rm exp}(\tau H_p) {\rm{WF}}(\delta_{x_0})
\end{equation}
Here $\delta_{x_0}$ is the Dirac measure at $x_0\in M$, $U(t)\delta_{x_0}=e^{itP}\delta_{x_0}$,
${\rm exp}(t H_l )$ and ${\rm exp}(t H_q)$ are the Hamilton flows for the symbols $l=\sigma_{pri} (\sqrt{-\Delta_M})$, and $q=\sigma_{pri} (\tilde {P_s})$ respectively. $\sigma_{pri}(\tilde {P_s})$ is defined by (3.2).
\end{thm}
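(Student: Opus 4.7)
The plan is to reduce the problem, via Proposition 3.5, to the microlocal analysis of the reduced propagator $e^{i 2\pi\alpha(\alpha\sqrt{-\Delta_M}/2+\tilde P_s+\tilde E)}e^{-i 2\pi\alpha L^2}\delta_{x_0}$, since the conjugating zeroth order invertible $\Psi$DO $e^{iS}$ leaves wave front sets untouched by Remark 3.2. The outer first-order factor has real principal symbol $\alpha l/2+\tilde p_s$, so by Proposition 2.1 it transports the wave front set by $\exp(2\pi\alpha H_{\alpha l/2+\tilde p_s})$; because $\tilde P_s$ commutes with $\Delta_M$ the Poisson bracket $\{l,\tilde p_s\}$ vanishes, so this flow factors as $\exp(\pi\alpha^2 H_l)\circ \exp(2\pi\alpha H_{\tilde p_s})$. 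The geodesic flow $\exp(\pi\alpha^2 H_l)$ will eventually be absorbed into the full periodic orbit produced by the second factor.

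The core task is therefore to show
\[
 \mathrm{WF}\bigl(e^{-i 2\pi\alpha L^2}\delta_{x_0}\bigr) = \bigcup_{\tau\in[0,2\pi]} \exp(\tau H_l)\,\mathrm{WF}(\delta_{x_0}).
\]
For the inclusion $\subseteq$, I would use the representation (2.12),
\[
 e^{-i 2\pi\alpha L^2} = \la L\ra^N \cdot\frac{1}{2\pi}\int_0^{2\pi}\la D_y\ra^{-N}G(2\pi\alpha,y)\, e^{-iyL}\,dy,
\]
and view the $y$-dependent family $e^{-iyL}\delta_{x_0}$ as a Lagrangian distribution on $S^1_y\times M_x$ supported on the graph of the geodesic flow, with fiber $\exp(-yH_l)\mathrm{WF}(\delta_{x_0})$ over each $y$ by Proposition 2.1. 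Pairing this family against the distributional weight $G(2\pi\alpha,\cdot)$ in $y$ can only produce singularities above points of $\mathrm{sing\,supp}\,G(2\pi\alpha,\cdot)$, which by Proposition 2.7 is all of $S^1$; combined with the change of variable $\tau=2\pi-y$ (and the periodicity $\exp(2\pi H_l)=\mathrm{id}$) this yields the containment.

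For the reverse inclusion $\supseteq$, I would emulate the argument of Example 1 (ii). The Galilean symmetry (1.7) and the parity (1.8) force genuine $H^{1/2}$-singularities of $G(2\pi\alpha,\cdot)$ to propagate to the set $\{y_0+4\pi\alpha k \bmod 2\pi:k\in\Z\}$, which is dense in $S^1$ for $\alpha\notin\Q$; at each such $y$, $e^{-iyL}\delta_{x_0}$ carries singularities along $\exp(-yH_l)\mathrm{WF}(\delta_{x_0})$. A density-and-closedness argument, using that both sides of the displayed equality are closed conic subsets of $T^*M\setminus\{0\}$, promotes this to the reverse inclusion.

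The main obstacle is the microlocal analysis of the composite operator
\[
 K = \int_0^{2\pi}\la D_y\ra^{-N}G(2\pi\alpha,y)\,e^{-iyL}\,dy,
\]
whose Schwartz kernel is a Lagrangian distribution on the graph of the geodesic flow, but weighted by the highly singular factor $G(2\pi\alpha,y)$ in the parameter direction; it is neither a standard $\Psi$DO nor a standard FIO. Verifying rigorously that $\mathrm{WF}(K\delta_{x_0})$ equals the full geodesic-orbit union is where the delicate microlocal bookkeeping lies, and requires a careful composition of the wave front set of $G(2\pi\alpha,\cdot)$ with the canonical relation of $e^{-iyL}$. Once this is in place, composing with the flow $\exp(2\pi\alpha H_{\tilde p_s})\exp(\pi\alpha^2 H_l)$ from the first step and absorbing $\exp(\pi\alpha^2 H_l)$ into the periodic union (reading the $H_p$ in the display as $H_l$) produces exactly the formula of Theorem 4.1.
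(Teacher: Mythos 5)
Your proposal follows the same overall strategy as the paper: reduce via Proposition 3.5 to $e^{-i2\pi\alpha L^2}\delta_{x_0}$, represent this by (2.12) as an integral of $e^{-iyL}\delta_{x_0}$ against the one-dimensional distribution $G(2\pi\alpha,y)$, use the Galilean symmetry (1.7) and density of $\{4\pi\alpha k\bmod 2\pi\}$ to see the singularity everywhere, and absorb the residual geodesic shift from the outer factor into the periodic union. That part is correct and matches the paper.

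However, there is a genuine gap precisely where you flag one. You write that the composite operator $K=\int_0^{2\pi}\la D_y\ra^{-N}G(2\pi\alpha,y)e^{-iyL}\,dy$ is ``neither a standard $\Psi$DO nor a standard FIO'' and that showing $\mathrm{WF}(K\delta_{x_0})$ equals the full geodesic orbit ``requires careful composition of the wave front set of $G$ with the canonical relation of $e^{-iyL}$'' --- but you never carry this out, and this composition is the entire technical content of the proof. In particular, knowing that $\mathrm{sing\,supp}\,G(2\pi\alpha,\cdot)$ is dense in $S^1$ does not by itself imply that the $y$-integral against $e^{-iyL}\delta_{x_0}$ inherits a dense set of singularities in $x$: a priori the pairing could smooth things out, or cancellation between the $e^{+iyL}$ and $e^{-iyL}$ pieces could occur. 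Your ``density-and-closedness'' step therefore does not close.

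The paper resolves this by an explicit local computation. It writes $e^{\pm iyL}\delta_{x_0}$ near a nearby point $x_1=r_1\theta_1$ as an FIO parametrix in normal polar coordinates, applies stationary phase in the angular variable $\omega$, and reduces it modulo $C^\infty$ to a conormal form $\int\!\!\int e^{i\lambda'(r-z)}\hat a'_\pm(z,r\theta,\lambda')|\lambda'|^{(d-1)/2}\delta(z-y)\,dz\,d\lambda'$ (Proposition 4.4). Plugging this into (2.13) turns the $y$-integral into an \emph{elliptic} $\Psi$DO $B(\theta)$ in the radial variable acting on $\tilde\rho(y)\la D_y\ra^{-N}G(t,y)$ (Proposition 4.6). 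This is the mechanism that rigorously transfers the singularity of $G$ at $y=r_1$ to a singularity of $e^{-itL^2}\delta_{x_0}$ at $r=r_1$, in the direction $\pm\theta_1$. The no-cancellation issue is then handled by carefully separating the two terms (called I and II, equations (4.14)--(4.15)) and showing that the points $\exp(\pm 4\pi\alpha H_l)(r_1\theta_1,\mp\theta_1)$ are \emph{not} in the wave front set of the ``wrong'' piece, so that no unexpected cancellation can remove the singularity. Without this FIO/stationary-phase reduction and the explicit ellipticity of $B(\theta)$, your outline does not yield the hard inclusion $\supseteq$.
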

\begin{remark}
We can also prove similar identity to ${\rm WF }_{d/2}$. 
We notice the following
\begin{equation}
{\rm{WF}}(U(t)u)\subset {\rm exp}(t H_q)\cup_{0\leq \tau \leq 2\pi} {\rm exp}(\tau H_p) {\rm{WF}}(u)
\end{equation}
by (2.12) and Proposition 3.5. 
\end{remark}
\begin{remark}
 Since $M$ is complete, if $\sigma_{pri} (\tilde {P_s})=0$, then we have 
$${\rm{sing\ supp}}(U(2\pi\alpha)\delta_{x_0})=M$$ 
from Theorem 4.1. This implies Theorem 1.3 (ii).
\end{remark}

By Proposition 3.5, we study the singularity of $e^{-itL^2}\delta_{x_0}$ here $L$ is defined by (3.6). 
We fix the coordinates $(x, \xi)$ of the cotangent bundle where $x$ is the normal coordinate around $x_0=0$ and $\xi$ is the corespondent coordinate. 
We study $e^{-itL^2}\delta_{x_0}$ near $x_0$. We recall that $e^{-itL} $ can be approximated, modulo smooth functions, by the Fourier integral operator, see \cite{Gr-Sj}, \cite{Ho}
\begin{equation}
e^{itL}u\equiv (2\pi)^{-d/2}\int e^{i(\Phi(t, x,\xi)-y\cdot\xi)}a(t, x,\xi)u(y)dyd\xi\ \ {\rm mod}\ C^\8(M).
\end{equation}
From now on, we omit the term ${\rm mod}\ C^\8(M)$. Here the phase $\Phi(t, x,\xi)$ and the amplitude $a(t, x,\xi)$ are chosen to satisfy
\begin{align}
\begin{cases}
&(i\pl_t+L)e^{i\Phi(t, x,\xi)}a(t, x,\xi)= {\cal O}((1+|\xi|)^{-\8})\\
&e^{i\Phi(0, x,\xi)}a(0, x,\xi)=(2\pi)^{-d/2}e^{ix\cdot\xi}.
\end{cases}
\end{align}
These functions can be taken if $\Phi(t, x,\xi)$ satisfies the following Eikonal equation
\begin{equation}
\begin{cases}
&\pl_t\Phi+l(x,\pl_x\Phi)=0\\
&\Phi(0, x,\xi)=x\cdot\xi
\end{cases}
\end{equation}
here the symbol $l(x,\xi)$ is the principal symbol to the operator $L$ and we also inductively choose the amplitude $a(t, x,\xi)=a_0+a_1+\cdots$ by solving transport equations. 
For small $t$, these equations have solutions and we get $\Phi(t, x,\xi)$, a real $C^\8$ function homogeneous of degree one in $\xi$ and $a(t, x,\xi)=a_0+a_1+\cdots\in S_{phg}^{0}(M)$ where $a_j\in S^{-j}(M)$ is homogeneous of degree $-j$ in $\xi$. 
In (4.3) we take $u=\delta_{x_0}$, and get
$$e^{itL}\delta_{x_0}\equiv \int e^{i\Phi(t, x,\xi)}a(t, x,\xi)d\xi=:A(t,x).$$
Here the integral is defined by the oscillatory integral. Taking a point $x_1\not= x_0$ near $x_0$, we study $A(t,x)$ in some neighborhood of $x_1$.  
Change coordinates to polar coordinates $(r,\theta)$, $(\lambda,\omega)$ to $x$, $\xi$ respectively. We have
$$A(t,r\theta)=\int_0^\8\int_{S^(d-1)} e^{i\Phi (t,r\theta,\lambda\omega)}a(t,r\theta,\lambda\omega)\lambda^{d-1}d\lambda d\omega$$
We write $x_1=r_1\theta_1$. Here $r_1>0$ is sufficient small. We shall apply the stationary phase method to the $\omega$ integral. 
Since $\Phi$ satisfies (4.5), we have
$$\Phi(t, x,\xi)=x\xi-t(\sum_{i,j}g^{ij}\xi_i\xi_j)^{1/2}+ {\cal O}(t^2\xi).$$
On normal coordinate, the metric satisfies
\begin{equation}
g^{ij}(x)=\delta_{ij}+{\cal O}(|x|).
\end{equation}
We take $|x|=r$ the same order as $t$. Then we get
\begin{equation}
\Phi(t, x,\xi)=x\cdot\xi-t|\xi|+ {\cal O}(t^2\xi)=\lambda(r\theta \cdot\omega-t+{\cal O}(t^2))
\end{equation}
Since $\theta \cdot\omega$ take its maximum or minimum at $\omega=\pm\theta$ respectively,
 $\omega=\pm\theta$ is critical values of $\theta\cdot \omega$. Moreover they are only critical points and non-degenerate i.e. the Hessian is not degenerate. 
So we can assume the critical points of $\Phi(t, r\theta,\lambda\omega)$ is non-degenerate. By the stationary phase method, we obtain
\begin{align*}
A(t,r\theta)\equiv\sum_{j}\int_0^\8e^{i\lambda {\tilde\Phi }_j(t,r\theta )} {\tilde a}_j(t,r\theta,\lambda)\lambda^{(d-1)/2}d\lambda
\end{align*}
The phase function is defined by
$${\tilde\Phi }_j(t,r\theta )=\Phi(t, r\theta,\omega_j)$$
where $\omega_j$ are finite critical points to $\Phi$. 
The amplitude is
$${\tilde a}_j(t,r\theta,\lambda)={\tilde a}_{j 0}+{\tilde a}_{j1}+\cdots \in S_{phg}^{0}(U)$$ 
considering $t$ and $\theta$ as $C^\8$ parameter and $U$ is some neighborhood of $r_1$.
We have ${\tilde\Phi }_j=(r-t)+{\cal O}(rt)$ or ${\tilde\Phi }=-(r+t)+{\cal O}(rt)$ from (4.7).  
We also assume ${\pl_r\tilde\Phi_j }\not=0$ by changing $r\sim t$ smaller. If there exists a point $r_2\theta_2$ such that $\tilde\Phi_j(t,r_2\theta_2)\not=0$ then by using the identity
$$\frac{1}{i}\tilde\Phi_j^{-1}\pl_\lambda e^{i\lambda \tilde\Phi_j}=e^{i\lambda \tilde\Phi_j}$$
and integration by parts, we get the integral is  $C^\8$ near this point.
On the other hand, by the propagation of singularity, $A(t,x)$ has only singularity at $r=t,-t$. 
So essential parts of the integral are the phase ${\tilde\Phi }_j$s which are the following form
$${\tilde\Phi }_j^+=(r-t){\hat \Phi _j^+}\text{　or　}{\tilde\Phi }_j^-=-(r+t){\hat \Phi _j}^-.$$
Since ${\pl_r\tilde\Phi_j }\not=0$, we have ${\hat \Phi }^\pm_j\not=0$. Near $r_1$, we get
\begin{align*}
A(t,r\theta)\equiv\sum_j \int_0^\8&e^{i\lambda(r-t){\hat \Phi }^+_j(t,r\theta )} {\tilde a_j}(t,r\theta,\lambda)\lambda^{(d-1)/2}d\lambda\\
&+\sum_j \int_0^\8e^{-i\lambda(r+t){\hat \Phi }^-_j(t,r\theta )} {\tilde a_j}(t,r\theta,\lambda)\lambda^{(d-1)/2}d\lambda
\end{align*}
By changing the variable as $\lambda'=\lambda{\hat \Phi }^{\pm}_j(t,r\theta )$
\begin{align*}
A(t, r\theta)\equiv\int_0^\8&e^{i\lambda'(r-t)} {\hat a}_+(t,r\theta,\lambda'){\lambda'}^{(d-1)/2}d\lambda'\\
&+\int^{\8}_0e^{-i\lambda(r+t)}{\hat a}_-(t,r\theta,\lambda'){\lambda'}^{(d-1)/2}d\lambda'
\end{align*}
where ${\hat a}_{\pm}=\sum_{j}{\tilde a_j}(t,r\theta,\lambda{\hat \Phi }^{\pm}_j){\hat \Phi }_j^{\pm}$. By the propagation of singularity, we know ${\hat a_{\pm}}$ is elliptic. If $t>0$, since $(r+t)\not=0$ the second term is $C^\8$, we can write this integrals as follows
\begin{align*}
A(t,r\theta)&\equiv\int_0^\8e^{ i\lambda'(r- t)} {\hat a}_{+}(t,r\theta,\lambda'){\lambda'}^{(d-1)/2}d\lambda'\\
&=\int_0^\8\int^\8_\8e^{ i\lambda(r- z)} {\hat a}'_{+}(z,r\theta,\lambda'){\lambda'}^{(d-1)/2}\delta(z-t)dzd\lambda'
\end{align*}
If $t<0$, the first term is $C^\8$ and we have
\begin{align*}
A(t,r\theta)&\equiv\int^{\8}_0e^{-i\lambda(r+t) }{\hat a}_-(t,r\theta,\lambda'){\lambda'}^{(d-1)/2}d\lambda'\\
&=\int_{-\8}^0\int^\8_{-\8}e^{i\lambda'(r- z)} {\hat a}'_{-}(z,r\theta,\lambda'){|\lambda'|}^{(d-1)/2}\delta(z-|t|)dzd\lambda'
\end{align*}
Here ${\hat a}'_{-}={\hat a_-}(z,r\theta,|\lambda'|)$. Thus we have
\begin{prop}
Let $M$ be a smooth Riemannian manifold, $x_0 \in M$ and $L$ is a self-adjoint $\Psi$DO satisfies
 $\sigma_{pri}(L)=\sigma_{pri}(\sqrt{-\Delta_M})$. 
We take $x_1\not=x_0$ sufficiently near $x_0$ and the normal coordinates $r\theta$ around $x_0=0$. Then there exists $U$, a neighborhood of $r_1$, 
and ${\hat a}'_{\pm}(z,r\theta,\lambda')\in S_{phg}^{0}(U)$, elliptic symbol with $C^\8$ parameter $\theta$ 
such that for small $t>0$, we have the following identity near $x_1$ modulo smooth functions
\begin{equation}
e^{\pm itL}\delta_{x_0}(r\theta) \equiv
\begin{cases}
\ds \int_0^\8\int^\8_{-\8}e^{ i\lambda'(r- z)} {\hat a}'_{+}(z,r\theta,\lambda'){\lambda'}^{(d-1)/2}\delta(z-t)dzd\lambda'\\
\ds \int_{-\8}^0\int^\8_{-\8}e^{i\lambda'(r- z)} {\hat a}'_{-}(z,r\theta,\lambda'){|\lambda'|}^{(d-1)/2}\delta(z-t)dzd\lambda'
\end{cases}.
\end{equation}
\end{prop}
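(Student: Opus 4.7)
The plan is to represent $e^{\pm itL}\delta_{x_0}$ by a Fourier integral operator modulo $C^\infty$, then analyze the resulting oscillatory integral by stationary phase together with a change of variables. First I would invoke the standard parametrix construction for $e^{itL}$ recalled in (4.3)--(4.5): for sufficiently small $t$ there exists a real phase $\Phi(t,x,\xi)$ homogeneous of degree one in $\xi$, solving the eikonal equation with $\Phi|_{t=0} = x\cdot\xi$, together with an amplitude $a(t,x,\xi) \in S_{phg}^0$ constructed inductively from transport equations, so that (4.3) holds modulo a smoothing operator. Applying this to $\delta_{x_0}$ gives
\begin{equation*}
A(t,x) = \int e^{i\Phi(t,x,\xi)} a(t,x,\xi)\,d\xi.
\end{equation*}

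Next I would switch to polar coordinates $(r,\theta)$ for $x$ and $(\lambda,\omega)$ for $\xi$. The normal-coordinate expansion $g^{ij}(x)=\delta_{ij}+\mathcal{O}(|x|)$ and $\xi$-homogeneity of $l$ yield $\Phi(t,r\theta,\lambda\omega)=\lambda(r\theta\cdot\omega - t + \mathcal{O}(t^2))$ in the regime $r\sim t$ small. The $\omega$-integral is then handled by stationary phase: $\theta\cdot\omega$ on $S^{d-1}$ has exactly two non-degenerate critical points $\omega=\pm\theta$, and shrinking $t$ preserves non-degeneracy for the full phase. The outcome is a finite sum
\begin{equation*}
A(t,r\theta) \equiv \sum_j \int_0^\infty e^{i\lambda\tilde\Phi_j(t,r\theta)} \tilde a_j(t,r\theta,\lambda)\,\lambda^{(d-1)/2}\,d\lambda,
\end{equation*}
with $\tilde\Phi_j(t,r\theta) = \pm(r\mp t) + \mathcal{O}(rt)$.

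The third step is to isolate which phases actually carry singularities. Factoring each as $\tilde\Phi_j^\pm = \mp(r\mp t)\hat\Phi_j^\pm$ with $\hat\Phi_j^\pm\neq 0$ (guaranteed by $\partial_r\tilde\Phi_j\neq 0$ after further shrinking), and using propagation of singularities to locate $\mathrm{WF}(A(t,\cdot))$ on the geodesic spheres $r=\pm t$, integration by parts in $\lambda$ via $i^{-1}\tilde\Phi_j^{-1}\partial_\lambda e^{i\lambda\tilde\Phi_j} = e^{i\lambda\tilde\Phi_j}$ discards every phase that is non-vanishing on a neighborhood of $x_1$, as a $C^\infty$ remainder. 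The linear change of variable $\lambda'=\lambda\hat\Phi_j^\pm$ then repackages the surviving terms into two integrals as in the statement, with amplitudes $\hat a_\pm \in S_{phg}^0(U)$ that are elliptic by the ellipticity forced by propagation of singularities. For $e^{+itL}$ and $t>0$ the phase $r+t$ does not vanish near $x_1$, so the $(-)$ piece is smooth and only the first integral of (4.8) survives; rewriting it as $\int_0^\infty\int_{-\infty}^\infty e^{i\lambda'(r-z)}\hat a'_+(z,r\theta,\lambda')(\lambda')^{(d-1)/2}\delta(z-t)\,dz\,d\lambda'$ puts it in the stated form. The case $e^{-itL}$ with $t>0$ is treated symmetrically by sending $t\mapsto -t$, which routes the surviving contribution into the bottom case of (4.8).

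The main obstacle will be the bookkeeping needed to verify that the amplitudes $\hat a_\pm$ obtained after stationary phase and after the change of variable $\lambda'=\lambda\hat\Phi_j^\pm$ really lie in $S_{phg}^0(U)$ with $\theta$ a smooth parameter, and that every cutoff near $\lambda'=0$, near $\omega=\pm\theta$, and near $r=0$ introduces only $C^\infty$ errors. Propagation of singularities tells us where the answer must be singular, but converting that geometric information into ellipticity of $\hat a_\pm$ and into the innocuousness of each integration-by-parts remainder is the technical point that has to be checked carefully.
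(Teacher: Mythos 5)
Your proposal follows essentially the same route as the paper: FIO parametrix from the eikonal/transport construction, polar coordinates, stationary phase in $\omega$ about the critical points $\omega=\pm\theta$, factoring the surviving phases as $\mp(r\mp t)\hat\Phi_j^\pm$, discarding non-vanishing phases by integration by parts in $\lambda$, changing variable $\lambda'=\lambda\hat\Phi_j^\pm$, and using propagation of singularities both to localize to $r=\pm t$ and to force ellipticity of the resulting amplitude. The paper carries out the two sign cases by running $A(t,\cdot)$ over $t>0$ and $t<0$, which matches your ``send $t\mapsto -t$'' symmetry, so there is no genuine difference in approach.
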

\begin{remark}
We can get this fact more directory by solving (4.5). 
For short, we use the propagation of singularity.  
\end{remark}
Now we study the singularity of $e^{- itL^2}\delta_{x_0}$ near $x_1$. 
We take a smooth cut-off function $\rho(r\theta)$ near $(r_1, \theta_1)$,
 i.e. $\rho=1$ on a small neighborhood of $(r_1, \theta_1)$ and $\rho$ is identically zero out side another neighborhood of $(r_1, \theta_1)$. 
Then by the identity (2.13) and the propagations of singularity, we have the following identity for $N>1$
\begin{equation}
\rho e^{-itL^2}\delta_{x_0}\equiv \rho \la L\ra^{N}\int^{\pi}_{0}\tilde\rho(y)\{\la D_y\ra^{-N}  G(t,y)\}\{(e^{iyP}+e^{-iyP})\}dy\delta_{x_0}
\end{equation}
where $\tilde\rho(y)\in C^\8([0,\pi]))$ is a cut-off function near $r_0$. By Proposition 4.4, the right hand side can be written as follows in the normal polar coordinate near $x_0$.
\begin{align*}
(e^{iyP}+e^{-iyP})\delta_{x_0}\equiv \frac{1}{2\pi}\int_{-\8}^\8\int_{-\8}^\8e^{i \lambda'(r- z)} b(z,r\theta,\lambda')\delta(z-y)dzd\lambda'\}
\end{align*}
Here we take
\begin{equation}
b(z,r\theta,\lambda')=
\begin{cases}
2\pi{\hat a_+}'(z,r\theta,\lambda'){\lambda'}^{(d-1)/2}\ \ \ \lambda'>0\\
2\pi{\hat a_-}'(z,r\theta,\lambda'){|\lambda'|}^{(d-1)/2}\ \ \    \lambda'<0. 
\end{cases}
\end{equation}
Since $\hat a'_\pm$ is elliptic with respect to $\lambda'$  , $b$ is elliptic. The right hand side of (4.9) becomes
\begin{align*}
\rho & \la L\ra^{N}\int^{\pi}_{0}\tilde\rho(y)\{\la D_y\ra^{-N}  G(t,y)\}  \frac{1}{2\pi}\int_{-\8}^\8\int_{-\8}^\8e^{i \lambda'(r- z)} b(z,r\theta,\lambda')\delta(z-y)dzd\lambda'      dy\\
&= \rho \la L\ra^{N}\frac{1}{2\pi}\int_{-\8}^\8\int_{-\8}^\8e^{i \lambda'(r- z)} b(z,r\theta,\lambda')\int^{\pi}_{0}\tilde\rho(y)\{\la D_y\ra^{-N}  G(t,y)\}\delta(z-y)  dy dzd\lambda'  \\
&= \rho \la L\ra^{N}\frac{1}{2\pi}\int_{-\8}^\8\int_{-\8}^\8e^{i \lambda'(r- z)} b(z,r\theta,\lambda')\{\tilde\rho(y)\la D_y\ra^{-N}  G(t,y)\}|_{y=z}  dzd\lambda' 
\end{align*}
Here the last integral is $\Psi$DO with $C^\8$ parameter $\theta$. We write this operator as $B(\theta)$ and get
\begin{prop}
Let $M$ be a Zoll manifold with period $2\pi$, $x_0 \in M$ and $L$ is a self-adjoint $\Psi$DO defined by (3.5). 
We take $x_1\not=x_0$ sufficiently near $x_0$ and the normal polar coordinates $r\theta$ around $x_0=0$. Then there exists $U$, a neighborhood of $r_1$, 
and $b(z,r\theta,\lambda')\in S^{(d-1)/2}(U)$, elliptic symbol with $C^\8$ parameter $\theta$ 
such that for small $t>0$ and $n>1$, we have the following identity near $x_1$ modulo smooth functions
\begin{equation}
\rho e^{-itL^2}\delta_{x_0}\equiv \la L\ra^{N} B(\theta)( \tilde\rho(y)\la D_y\ra^{-N}  G(t,y))\
\end{equation}
\end{prop}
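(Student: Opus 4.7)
The plan is essentially to assemble the pieces already developed: apply the functional calculus identity (2.13) to express $e^{-itL^2}$ in terms of $G(t,y)$ and the half-wave groups $e^{\pm iyL}$, substitute the Fourier integral representation of $e^{\pm iyL}\delta_{x_0}$ from Proposition 4.4, and then rearrange the resulting triple integral into the form of a pseudodifferential operator acting on $\tilde\rho(y)\langle D_y\rangle^{-N} G(t,y)$.

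More concretely, I would first fix the cutoff $\rho$ near $x_1 = r_1\theta_1$ and a matching cutoff $\tilde\rho(y)$ supported near $y = r_1$ (which is legitimate because the propagation of singularities guarantees that only $y \approx r_1$ contributes to the singular support of $\rho e^{\pm iyL}\delta_{x_0}$ near $x_1$, up to smoothing). With $N > 1$ chosen so that $\sum \langle k\rangle^{-N}$ converges absolutely, identity (2.13) combined with the evenness symmetry gives
\begin{equation*}
\rho e^{-itL^2}\delta_{x_0} \equiv \rho \langle L\rangle^{N}\int_0^{\pi}\tilde\rho(y)\bigl\{\langle D_y\rangle^{-N}G(t,y)\bigr\}\bigl(e^{iyL}+e^{-iyL}\bigr)\delta_{x_0}\,dy
\end{equation*}
modulo $C^\infty$. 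Next I would substitute Proposition 4.4 with the amplitude $b(z,r\theta,\lambda')$ defined piecewise by (4.10) (so that $b$ absorbs the factor $|\lambda'|^{(d-1)/2}$ and the elliptic factors $\hat a'_\pm$), yielding a triple integral in $y$, $z$, $\lambda'$ with a $\delta(z-y)$ factor.

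The decisive step is to interchange the order of integration, collapse the $y$-integral against $\delta(z-y)$, and recognize the outcome
\begin{equation*}
\frac{1}{2\pi}\int\!\!\int e^{i\lambda'(r-z)}\,b(z,r\theta,\lambda')\,\bigl\{\tilde\rho(z)\langle D_z\rangle^{-N}G(t,z)\bigr\}\,dz\,d\lambda'
\end{equation*}
as a standard $\Psi$DO $B(\theta)$ (with $\theta$ entering as a smooth parameter) applied to the distribution $\tilde\rho(y)\langle D_y\rangle^{-N}G(t,y)$ on a one-dimensional $y$-interval $U$. The interchange is justified because the amplitude $b \in S^{(d-1)/2}$ has polynomial growth in $\lambda'$, while the smoothing error from truncating the $y$-integral is absorbed into the $\equiv$ (equality mod $C^\infty$), and because $\tilde\rho\langle D_y\rangle^{-N}G$ is a compactly supported distribution of finite Sobolev order once $N$ is chosen large enough relative to the Sobolev regularity of $G$. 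Ellipticity of $b$ is inherited from the ellipticity of $\hat a'_\pm$ asserted in Proposition 4.4.

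The main obstacle I anticipate is bookkeeping the cutoffs and the smoothing errors cleanly: one must check that (i) the oscillatory representation of Proposition 4.4 remains valid throughout $\supp \tilde\rho$, not only at the single radius $r_1$, which forces $U$ to be a genuine neighborhood and requires $t$ small; (ii) the action of $\langle L\rangle^N$ on the left does not spread singularities outside $\supp \rho$ modulo smoothing, which follows from microlocal principles since $\langle L\rangle^N$ is a $\Psi$DO of finite order; and (iii) the factor $\delta(z - y)$ really does permit collapsing a double integration against $G(t,\cdot) \in \mathcal{D}'$ — this is a distributional pairing that has to be interpreted through duality, using that $b(z,r\theta,\lambda')$ is smooth in $z$ and that $\tilde\rho\langle D_z\rangle^{-N}G(t,\cdot)$ has sufficiently high Sobolev regularity. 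Once those three points are handled, the identity (4.11) follows immediately from the definition of $B(\theta)$.
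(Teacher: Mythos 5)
Your proposal follows exactly the same route as the paper: begin from the functional-calculus identity (2.13) with the cutoff $\tilde\rho$ inserted (justified by propagation of singularities), substitute the Fourier-integral representation of $(e^{iyL}+e^{-iyL})\delta_{x_0}$ from Proposition 4.4 with the piecewise-defined elliptic amplitude $b$ from (4.10), then interchange integrals and collapse against $\delta(z-y)$ to recognize the result as the $\Psi$DO $B(\theta)$ applied to $\tilde\rho(y)\langle D_y\rangle^{-N}G(t,y)$. Your three technical caveats — validity of the FIO parametrix throughout $\supp\tilde\rho$, microlocal behavior of $\langle L\rangle^N$, and the distributional pairing — are genuine points the paper passes over quickly, but they do not change the argument, which is correct.
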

This proposition reduce the singularity of $e^{-itL^2}\delta_{x_0}$ to $G(t,y)$. 
If $t/2\pi=\alpha\in\R\backslash\Q$, by Proposition 2.9, $\tilde G(y)=\tilde\rho(y)\la D_y\ra^{-N}G(2\pi\alpha,y)$ is singular at $r_1$ to both directions. 
Since $B(t,\theta)$ is elliptic $\Psi$DO with $C^\8$ parameter $\theta$, this preserve the singularity to $r$-variable. We have 
\begin{equation*}
(r_1\theta_1, \pm \theta_1)\in {\rm WF}(e^{-itL^2}\delta_{x_0}).
\end{equation*}
We also know, in normal polar coordinate, 
\begin{equation}
(r_1\theta_1, \pm \theta_1)= \exp (\pm r_1 H_l)(0, \pm\theta_1).
\end{equation}
Since we can take $x_1$ arbitrary near $x_0$, we have
\begin{equation}
(0, \pm \theta_1)\in {\rm WF}(e^{-itL^2}\delta_{x_0}).
\end{equation}
Next we prove the existence of the similar singularity at every points using (1.7). As in the previous argument, we take smooth cut-off functions $\rho(r\theta)$ near $\Pi_x(\exp ((r_1+4\pi\alpha) H_l)(0, \theta_1))$ and $\tilde\rho(y)$ near $r_1+4\pi\alpha$ We have 
\begin{align*}
\rho e^{-i2\pi\alpha L^2}\delta_{x_0}&\equiv \rho \la L\ra^{N}\int^{\pi}_{0}\tilde\rho(y)\{\la D_y\ra^{-N}  G(2\pi\alpha,y)\}(e^{iyL}+e^{-iyL})dy\delta_{x_0}\\
&=\rho\la L\ra^{N}\int^{\pi}_0 \tilde\rho(y)\{\la D_y\ra^{-N}  e^{i(2\pi\alpha+y)}G(2\pi\alpha,y+4\pi\alpha)\}  (e^{iyL}+e^{-iyL})\delta_{x_0}dy.
\end{align*}
Change the variable $y=y'-4\pi\alpha$. 
\begin{align*}
&=\rho\la L\ra^{N}\int^{\pi}_0 \tilde\rho(y'-4\pi\alpha)\{\la D_y\ra^{-N}  e^{i(y'-2\pi\alpha)}G(2\pi\alpha,y')\}  (e^{i(y'-4\pi\alpha)L }
+e^{-i(y'-4\pi\alpha)L})\delta_{x_0}dy'\\
&=\rho e^{i4\pi\alpha L}\la L\ra^{N}\int^{\pi}_0 \{\tilde\rho(y'-4\pi\alpha)\la D_y\ra^{-N}  e^{i(y'-2\pi\alpha)}G(2\pi\alpha,y')\} e^{i  y'L }\delta_{x_0}dy'\\
&\ \ +\rho e^{-i4\pi\alpha L}\la L\ra^{N}\int^{\pi}_0 \{\tilde\rho(y'-4\pi\alpha)\la D_y\ra^{-N}  e^{i(y'-2\pi\alpha)}G(2\pi\alpha,y')\} e^{- iy'L }\delta_{x_0}dy'.
\end{align*}
We write this as follows
\begin{equation}
\rho e^{-i2\pi\alpha L^2}\delta_{x_0}\equiv \rho e^{i4\pi\alpha L}{\rm I}+ \rho e^{-i4\pi\alpha L} \ {\rm II}.
\end{equation}
Now $\hat G(y')=\tilde\rho(y'-4\pi\alpha)\la D_y\ra^{-N}  e^{i(y'-2\pi\alpha)}G(2\pi\alpha,y')$ is singular to both directions at $y'=r_1$ and applying previous argument. We write (I) as $B_1(\theta)\hat G(y')$.
Since $B_1(\theta)$ a $\Psi DO$ elliptic to the positive direction $r>0$, we have   
\begin{align*}
(r_1\theta_1, \theta_1)\in {\rm WF}(\rm I)
\end{align*}
Taking $r_1$ sufficient small, we also assume
$$-r_1-8\pi\alpha\not\equiv r_1+4\pi\alpha , \ \ {\rm mod }\ 2\pi $$
By shrinking the support of $\rho$, we have
\begin{align*}
\exp (-8\pi\alpha H_l)(r_1\theta_1, -\theta_1)\not \in {\rm WF}(\rm I).
\end{align*}
By the propagation of the singularity, we get
\begin{equation}
\begin{split}
\exp (4\pi\alpha H_l)(r_1\theta_1, \theta_1)\in {\rm WF}(\rho e^{i4\pi\alpha L}\rm I)\\
\exp (-4\pi\alpha H_l)(r_1\theta_1, -\theta_1)\not \in {\rm WF}(\rho e^{i4\pi\alpha L}\rm I).
\end{split}
\end{equation}
Similarly we obtain
\begin{equation}
\begin{split}
\exp (8\pi\alpha H_l)(r_1\theta_1, \theta_1)\not\in {\rm WF}(\rho e^{-i4\pi\alpha L}\rm II)\\
\exp (-4\pi\alpha H_l)(r_1\theta_1, -\theta_1)\in {\rm WF}(\rho e^{-i4\pi\alpha L}\rm II).
\end{split}
\end{equation}
(4.12), (4.13), (4.14) and (4.15) mean
\begin{align*}
\exp ((r_1\pm4\pi\alpha) H_l)(0, \pm\theta_1)\in {\rm WF}(e^{-i2\pi\alpha L^2}\delta_{x_0}).
\end{align*}
Since $r_1>0$ is any small constant we have
\begin{align*}
\exp ((\pm4\pi\alpha) H_l)(0, \pm\theta_1)\in {\rm WF}(e^{-i2\pi\alpha L^2}\delta_{x_0}).
\end{align*}
Inductively we can prove similar singularity to the points $\exp ((4\pi\alpha k) H_l)(0, \pm\theta_1)$, $\forall k\in\Z$, $\forall\theta_1$.  Since $4\pi\alpha k$, $\in\Z$ are dense in $S^1$, we get
\begin{equation}
\cup_{0\leq \tau \leq 2\pi} {\rm exp}(\tau H_p) {\rm{WF}}(\delta_{x_0})\subset {\rm{WF}}(e^{-i2\pi\alpha L^2}\delta_{x_0})
\end{equation}
Here we use periodicity, $\exp (-t H_l)= \exp ((2\pi -t )H_l)$. 
By Proposition 3.5 and (4.2), this implies (4.1).
\begin{small}
\begin{center}

\end{center}

Department of Mathematics, Graduate School of Science, Osaka University,

1-1 Machikaneyama-cho, Toyonaka, Osaka 560-0043, Japan 

E-mail:h-nishiyama@cr.math.sci.osaka-u.ac.jp
\end{small}
\end{document}